\definecolor{myred}{RGB}{251,154,133}
\definecolor{myblue}{RGB}{153,206,227}
\definecolor{mylightblue}{RGB}{0, 150, 255}
\definecolor{mygreen}{RGB}{32, 210, 64}
\definecolor{mygray}{RGB}{220, 220, 220}
\tikzset{snake it/.style={decorate, decoration=snake}}
\newtheorem{theorem}{Theorem}
\newtheorem{definition}{Definition}[section]
\newtheorem{lemma}{Lemma}
\newtheorem{remark}{Remark}[section]
\newtheorem{corollary}{Corollary}[section]
\def\beq{ \begin{equation} }
\def\eeq{ \end{equation} }
\def\square{\vcenter{\vbox{\hrule height .4pt
  \hbox{\vrule width .4pt height 5pt \kern 5pt
        \vrule width .4pt} \hrule height .4pt}}}
\newcommand{\BN}{{\mathbb{N}}}
\newcommand{\BR}{{\mathbb{R}}}
\newcommand{\ind}{{\mathbbm{1}}}
\newcommand{\bae}{\begin{equation}\begin{aligned}}
\newcommand{\eae}{\end{aligned}\end{equation}}
\DeclareFontFamily{OML}{rsfs}{\skewchar\font'177}
\DeclareFontShape{OML}{rsfs}{m}{n}{ <5> <6> rsfs5 <7> <8> <9>
rsfs7 <10> <10.95> <12> <14.4> <17.28> <20.74> <24.88> rsfs10 }{}
\DeclareMathAlphabet{\mathfs}{OML}{rsfs}{m}{n}
\newcommand{\rdb}{\rho_{\text{DB}}}
\begin{document}

\title{An elementary proof for the Double Bubble problem in $\ell^1$ norm}

\author{Parker Duncan}
\address[Parker Duncan]{Department of mathematics, Texas A\&M University}
\email{parkeraduncan@tamu.edu}

\author{Rory O'Dwyer}
\address[Rory O'Dwyer\footnote{Work on this paper was done while the second author was affiliated with Texas A\&M University.}]{Department of Physics, Stanford University}
\email{rodwyer100@tamu.edu }

\author{Eviatar B. Procaccia}
\address[Eviatar B. Procaccia\footnote{Research supported by NSF grant 1812009.}]{Faculty of Industrial Engineering and Management, Technion - Israel Institute of Technology and Department of mathematics, Texas A\&M University}
\urladdr{http://www.math.tamu.edu/~procaccia}
\email{eviatarp@technion.ac.il}

\maketitle
\begin{abstract}
We study the double bubble problem with perimeter taken with respect to the $\ell_1$ norm on $\BR^2$. We give an elementary proof for the existence of minimizing sets for any volume ratio parameter $0<\alpha\le1$ by direct comparison to a small family of parameterized sets. By simple analysis on this family we obtain the minimizing shapes found in \cite{morgan1998wulff}.
\end{abstract}



\section{Introduction}
In \cite{foisy1993standard} and \cite{doublebubbleconj} the double bubble conjecture in $\BR^2$ and $\BR^3$ was established, stating that the unique perimeter-minimizing double bubble which encloses two fixed volumes consists of three spherical caps whose tangents meet at an angle of 120 degrees. Recently the Gaussian double bubble conjecture was established by Milman and Neeman \cite{milman2018gaussian}. These problems are an extension of the classical isoperimetric problem stating that the perimeter minimizing shape for a fixed volume is the sphere. The case of the isoperimetric problem in which the perimeter was taken with respect to any norm on $\BR^n$ was solved as well. Namely, Taylor \cite{taylor1974existence,taylor1975unique} proved that the unique solution of the isoperimetric inequality with respect to any norm $\rho$ is the renormalized ball in the dual norm, the so called Wulff construction \cite{wul1901frage}. For example the isoperimetric shape with respect to the $\ell_1$ is the $\ell_\infty$ ball $[0,1]^n$. Such non isotropic isoperimetric problems arose naturally in the field of probability, mostly in scaling limits of percolation clusters in a lattice \cite{alexander1990wulff, biskup2015isoperimetry, bodineau2000rigorous, cerf2006wulff}. In this paper we study the double bubble problem with respect to the $\ell_1$ norm. The result discussed in this paper was first proved in \cite{morgan1998wulff}, and is based on previous geometric measure theory results. However our proof is self contained and considerably simpler. Moreover our simple approach, that uses no geometric measure theory, seems to be more amenable to generalizations to higher dimensions.  

\subsection{Notations and results}
For any Lebesgue-measurable set $A\subset\mathbb{R}^2$, let $\mu(A)$ be its Lebesgue measure.  For a simple curve $\lambda:[a,b]\rightarrow\mathbb{R}^2$, not necessarily closed, where $\lambda(t)=(x(t),y(t))$, define its $\ell^1$ length by 
$$\rho(\lambda)=\sup_{N\geq1}\sup_{a\leq t_1\leq...\leq t_N\leq b}\sum_{i=1}^N\big(\left|x(t_{i+1})-x(t_i)\big|+\big|y(t_{i+1})-y(t_i)\big|\right).$$  
If we wish to measure only a portion of the curve $\lambda$, it will be denoted $\rho(\lambda
([t,t']))$, where $[t,t']\subset[a,b]$.  For simplicity we assume that $[a,b]=[0,1]$ unless otherwise stated.  \\

We say that two curves $\lambda,\lambda':[0,1]\rightarrow\mathbb{R}^2$ intersect nontrivially if there are intervals $[s,s'],[t,t']\subset[0,1]$ such that $\lambda([s,s'])=\lambda'([t,t'])$, then their nontrivial intersection can be written as the union of curves $\lambda_i$ such that $\lambda_i([0,1])=\lambda([s_i,s_{i+1}])\rightarrow\mathbb{R}^2$ for some intervals $[s_i,s_{i+1}]$, and we define the length of the nontrivial intersection to be $\rho(\lambda\cap\lambda'):=\sum_i\rho(\lambda_i)$.  \\

Here we are interested in the double bubble perimeter of two simply connected open sets $A,B\subset\mathbb{R}^2$ where the boundary of $A$, $\partial A$, is a closed, simple, rectifiable curve, and similarly for $B$, and where the intersection of the boundaries of $A$ and $B$ is a union of disjoint, rectifiable curves.  The double bubble perimeter is defined as $$\rho_{\text{DB}}(A,B)=\rho(\lambda)+\rho(\lambda')-\rho(\lambda\cap\lambda'),$$ where $\partial A=\lambda([0,1])$, and $\partial B=\lambda'([0,1])$.  We will also use the notation $\rho(\lambda)=\rho(\partial A)$.

For $\alpha\in(0,1]$, define:
$\gamma_{\alpha}=\{(A,B):A,B\subset\mathbb{R}^2$, where $ A,B$ are disjoint, simply connected open sets, and $\partial A$,$\partial B$,$\partial A\cap \partial B$ are unions of closed, continuous, simple, rectifiable curves, with $\mu(A)=1,\mu(B)=\alpha\}$.  

Let $$\rho_{\text{DB}}(\Gamma_{\alpha}):=\inf\{\rho_{\text{DB}}(A,B):(A,B)\in\gamma_{\alpha}\},$$ be the infimum of the double bubble perimeter (bounded below by zero).

The main result in this paper is:

\begin{theorem}\label{thm:maintheorem}

For $0<\alpha\le 1$, 
\begin{enumerate}[I.]
\item \label{existencepart}The set $\Gamma_{\alpha}:=\{(A,B)\in\gamma_{\alpha}: \rho_{\text{DB}}(A,B)=\rho_{\text{DB}}(\Gamma_{\alpha})\}$ is not empty.  

\item \label{part1}The infimum,
$$\rho_{\text{DB}}(\Gamma_{\alpha})=(4\sqrt{1+\alpha}+2\sqrt{\alpha})\ind_{(0,\frac{688-480\sqrt{2}}{49}]}+(4+2\sqrt{2\alpha})\ind_{\left(\frac{688-480\sqrt{2}}{49},\frac{1}{2}\right)}+(2\sqrt{6(1+\alpha)})\ind_{[\frac{1}{2},1]}.$$

\item \label{part:exampleshapes} For $\alpha=\frac{688-480\sqrt{2}}{49}$ we have $|\Gamma_{\alpha}|\ge 2$, with $\Gamma_\alpha$ containing both sets in Figure \ref{fig:possibleinf} (a) and (b). Moreover for $\alpha\in[1/2,1]$, $\Gamma_\alpha$ contains Figure \ref{fig:possibleinf} (c), for $\alpha\in\left(\frac{688-480\sqrt{2}}{49},\frac{1}{2}\right)$, $\Gamma_\alpha$ contains Figure \ref{fig:possibleinf} (b), and for $\alpha\in\left(0,\frac{688-480\sqrt{2}}{49}\right)$, $\Gamma_\alpha$ contains Figure \ref{fig:possibleinf} (a).
\begin{figure}[h!]\begin{tikzpicture}[scale=0.7]

\draw[blue,  thin] (0,4) to (4,4);
\draw[blue,  thin] (0,0) to (4,0);
\draw[blue,  thin] (0,0) to (0,4);
\draw[blue,  thin] (4,0) to (4,4);
\draw[blue, thin] (3,3) to (3,4);
\draw[blue, thin] (3,3) to (4,3);
\draw (2,0.32) node{$\sqrt{1+\alpha}$};
\draw (0.9,2) node{$\sqrt{1+\alpha}$};
\draw (2.5,3.5) node{$\sqrt{\alpha}$};
\draw (3.5,2.5) node{$\sqrt{\alpha}$};
\draw (2,-0.5) node{(a)};
\draw[blue,  thin] (5,4) to (9,4);
\draw[blue,  thin] (5,0) to (9,0);
\draw[blue,  thin] (5,0) to (5,4);
\draw[blue,  thin] (9,0) to (9,4);
\draw (7,0.25) node{$1$};
\draw (5.25,2) node{$1$};
\draw (7,-0.5) node{(b)};
\draw[blue,  thin] (9,1) to (10.5,1);
\draw[blue,  thin] (9,3) to (10.5,3);
\draw[blue,  thin] (10.5,1) to (10.5,3);
\draw (9.80,1.75) node{$\sqrt{2\alpha}$};
\draw (9.80,3.5) node{$\frac{\sqrt{2\alpha}}{2}$};
\draw[blue,  thin] (11,4) to (14.5,4);
\draw[blue,  thin] (11,0) to (14.5,0);
\draw[blue,  thin] (11,0) to (11,4);
\draw[blue,  thin] (14,0) to (14,4);
\draw[blue,  thin] (14.5,4) to (16.5,4);
\draw[blue,  thin] (14.5,0) to (16.5,0);
\draw[blue,  thin] (16.5,4) to (16.5,0);
\draw (12.05,1.8) node{$\sqrt{\frac{2(1+\alpha)}{3}}$};
\draw (12.4,3.4) node{$\sqrt{\frac{3}{2(1+\alpha)}}$};
\draw (15.25,3.4) node{$\alpha\sqrt{\frac{3}{2(1+\alpha)}}$};
\draw (14,-0.5) node{(c)};

\end{tikzpicture}\caption{\label{fig:possibleinf}}
\end{figure}
\end{enumerate}

\end{theorem}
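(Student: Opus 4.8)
The plan is to prove Part~\ref{part1} by matching an upper and a lower bound for $\rho_{\text{DB}}(\Gamma_\alpha)$; Parts~\ref{existencepart} and~\ref{part:exampleshapes} then come essentially for free, since the lower bound is \emph{attained} by the explicit (rectilinear, hence admissible) shapes of Figure~\ref{fig:possibleinf}, which gives non-emptiness of $\Gamma_\alpha$ without any compactness argument. For the upper bound I would just evaluate $\rho_{\text{DB}}$ on the three families: (a) a square of side $\sqrt{1+\alpha}$ whose corner square of side $\sqrt\alpha$ is the area-$\alpha$ bubble, giving $4\sqrt{1+\alpha}+2\sqrt\alpha$; (b) a unit square with a $\tfrac{\sqrt{2\alpha}}{2}\times\sqrt{2\alpha}$ rectangle glued to one side, giving $4+2\sqrt{2\alpha}$; and (c) two rectangles of common height $h$ glued along a full edge, giving $3h+2(1+\alpha)/h$, minimized at $h=\sqrt{2(1+\alpha)/3}$ to $2\sqrt{6(1+\alpha)}$. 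Taking the pointwise minimum and locating the crossovers — $\alpha=\tfrac12$, where (b) meets (c), and the root $\alpha_0=\frac{688-480\sqrt2}{49}$ of $4\sqrt{1+\alpha}+2\sqrt\alpha=4+2\sqrt{2\alpha}$, where (a) meets (b) — produces the claimed piecewise formula as an upper bound.

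The heart of the matter is the lower bound. Since the $\ell^1$ length of a curve is the sum of the total variations of its $x$- and $y$-coordinates, and total variation equals the integral of the number of level crossings, I write $\rho_{\text{DB}}(A,B)=V_x+V_y$ with $V_y=\int_{\BR}n(y)\,dy$, where $n(y)$ counts the points in which the horizontal line at height $y$ meets the interface $\partial A\cup\partial B$ (each shared-wall point counted once), and symmetrically for $V_x$. Because $A$ and $B$ are connected, each meets almost every horizontal line through its vertical extent, forcing at least two crossings of its own boundary; and at a height meeting \emph{both} $A$ and $B$ — which are disjoint — the line must cross at least three times (exterior, into one bubble, across to the other, back to the exterior). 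With $W,H$ the width and height of the bounding box of $A\cup B$ and $o_x,o_y$ the overlaps of the $x$- and $y$-projections of $A$ and $B$, this yields
\[
\rho_{\text{DB}}(A,B)\ \ge\ 2(W+H)+o_x+o_y .
\]
A direct check shows this is an \emph{equality} on all three families of Figure~\ref{fig:possibleinf}, so nothing is lost.

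It remains to minimize the right-hand side. Let $p\times q$ and $r\times s$ be the bounding boxes of $A$ and $B$, so $pq\ge1$ and $rs\ge\alpha$; after a reflection assume $B$ lies up and to the right of $A$, so $W=p+r-o_x$ and $H=q+s-o_y$, while disjointness inside $\mathrm{box}(A)\cup\mathrm{box}(B)$ gives $pq+rs-o_xo_y\ge 1+\alpha$. The functional then collapses to
\[
2(W+H)+o_x+o_y=2(p+q+r+s)-(o_x+o_y),
\]
so one wants the box dimensions small and the overlaps large. For fixed boxes the overlaps are maximized either by seating $\mathrm{box}(B)$ inside $\mathrm{box}(A)$ — the \emph{corner} regime $o_x=r,\ o_y=s$, where the constraint becomes $pq\ge1+\alpha$ and the problem decouples into $\min 2(p+q)$ subject to $pq\ge1+\alpha$ and $\min(r+s)$ subject to $rs\ge\alpha$, reproducing family (a) — or, once the product constraint $o_xo_y\le pq+rs-1-\alpha$ binds, by sending one overlap to zero — the \emph{side} regime $o_x=0,\ o_y=\min(q,s)$, which gives family (b) when $B$ is the shorter box (forcing $\alpha\le\tfrac12$) and family (c) at the constrained equal-height optimum $q=s$ when $\alpha\ge\tfrac12$. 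Comparing the corner value $4\sqrt{1+\alpha}+2\sqrt\alpha$ against the side value recovers the three regimes with thresholds $\alpha_0$ and $\tfrac12$; at $\alpha=\alpha_0$ families (a) and (b) tie, giving $|\Gamma_\alpha|\ge2$.

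The main obstacle is this constrained minimization: I must verify that \emph{partial} overlaps ($0<o_x<r$, $0<o_y<s$) and every relative placement of the two boxes are genuinely subsumed, so that the infimum of $2(p+q+r+s)-(o_x+o_y)$ over the full feasible set is attained in one of the corner/side regimes and never dips below the stated value — a finite but delicate case analysis that ends in the algebra identifying $\alpha_0$ as the relevant root of $4\sqrt{1+\alpha}+2\sqrt\alpha=4+2\sqrt{2\alpha}$. A secondary technical point is making the crossing count rigorous for arbitrary admissible boundaries — slices that are unions of several intervals, and the measure-zero set of heights tangent to the interface — but these only \emph{raise} $n(y)$ and are therefore harmless for a lower bound.
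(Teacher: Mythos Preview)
Your approach is correct and, at its core, equivalent to the paper's, but the packaging is different enough to be worth comparing. The paper proceeds \emph{constructively}: for any $(A,B)\in\gamma_\alpha$ it builds, via case analysis on how $A^\square$ and $B^\square$ overlap (Lemmas~\ref{lem:containedrec}--\ref{lem:twocorners}), an explicit pair $(\tilde A,\tilde B)$ in a finite family $\mathfs{F}_\alpha$ of six-parameter rectilinear configurations with $\rho_{\text{DB}}(\tilde A,\tilde B)\le\rho_{\text{DB}}(A,B)$, and then runs KKT on $\mathfs{F}_\alpha$. You instead extract the inequality directly: your Banach-indicatrix bound $\rho_{\text{DB}}(A,B)\ge 2(W+H)+o_x+o_y$ is exactly what the paper is proving when it hunts for ``two disjoint paths of vertical length $D_{H_1,H_2}$'' and ``a third crossing of the strip $S$''. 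Your six parameters $(p,q,r,s,o_x,o_y)$ are a change of variables from the paper's $(a,b,c,d,e,f)$---indeed $2(p+q+r+s)-(o_x+o_y)=2(a+b+c+f)+(d+e)$ on the nose---so your ``finite but delicate case analysis'' is literally the KKT computation of Section~\ref{sec:KKT}. What your formulation buys is cleanliness: no case split on the number of shared corners, and no volume-repair step (the paper has to massage $\tilde A,\tilde B$ back to areas $1,\alpha$ after squaring). What the paper's formulation buys is that it never has to worry whether the parametric infimum is \emph{realizable}, since every point of $\mathfs{F}_\alpha$ is an honest configuration by construction.

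One point to tighten: your sentence ``once the product constraint binds, by sending one overlap to zero'' is not how the optimization actually runs. For \emph{fixed} boxes with $o_xo_y\le K$ binding, the maximum of $o_x+o_y$ is attained with \emph{both} overlaps positive (at $o_x=\min(p,r)$ or $o_y=\min(q,s)$, the other on the hyperbola), not at $o_x=0$. The side regime $o_x=0$ emerges only at the \emph{global} optimum, where one finds $pq=1$, $rs=\alpha$ exactly, forcing $K=0$; equivalently, this is the KKT conclusion that the interior critical point of the ``general case'' is never optimal and the minimum lands on the kissing-rectangle boundary. So your outline is right, but the mechanism you describe for reaching it is not, and filling this in is exactly the work the paper does in Section~\ref{sec:KKT}.
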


Immediately from Theorem \ref{thm:maintheorem} part \ref{part1} we get:
\begin{corollary} 

There are two critical $\alpha$'s at which $\rho_{\text{DB}}(\Gamma_{\alpha})$ undergoes a phase transition. The first, at $\alpha=\frac{688-480\sqrt{2}}{49}$, is discontinuous in the first order derivative, while the second, at $\alpha=1/2$, is discontinuous in the second order derivative.

\end{corollary}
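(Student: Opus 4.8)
The corollary is immediate from part \ref{part1} once we regard $\rho_{\text{DB}}(\Gamma_\alpha)$ as a function of $\alpha$ glued from three real-analytic pieces,
$$f_1(\alpha)=4\sqrt{1+\alpha}+2\sqrt{\alpha},\qquad f_2(\alpha)=4+2\sqrt{2\alpha},\qquad f_3(\alpha)=2\sqrt{6(1+\alpha)},$$
on the intervals $(0,\alpha_1]$, $(\alpha_1,\tfrac{1}{2})$, $[\tfrac{1}{2},1]$ respectively, where $\alpha_1=\frac{688-480\sqrt{2}}{49}$. Since each piece is smooth on its open interval, the only candidates for non-smoothness are the two junction points $\alpha_1$ and $\tfrac{1}{2}$. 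The plan is therefore purely computational: first confirm continuity at each junction, then compare the left and right derivatives there, going up one order until they disagree.

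At the junction $\alpha_1$ I would first record that this value is exactly the root of $f_1(\alpha)=f_2(\alpha)$: squaring the radical identity $2\sqrt{1+\alpha}=2+(\sqrt2-1)\sqrt{\alpha}$ and solving the resulting quadratic in $\sqrt{\alpha}$ reproduces $\sqrt{\alpha_1}=\tfrac{20-12\sqrt2}{7}$, hence $\alpha_1=\tfrac{688-480\sqrt2}{49}$, which establishes continuity. Differentiating gives $f_1'(\alpha)=\frac{2}{\sqrt{1+\alpha}}+\frac{1}{\sqrt{\alpha}}$ and $f_2'(\alpha)=\sqrt{2/\alpha}$, so
$$f_1'(\alpha_1)-f_2'(\alpha_1)=\frac{2}{\sqrt{1+\alpha_1}}-\frac{\sqrt2-1}{\sqrt{\alpha_1}}.$$
To see this is strictly positive (a downward kink of the slope) I would reduce it, upon clearing denominators and squaring the positive quantities, to $4\alpha_1>(3-2\sqrt2)(1+\alpha_1)$, equivalently $\alpha_1>\tfrac{8\sqrt2-11}{7}$, and finally to $765>536\sqrt2$, which holds since $765^2=585225>574592=2\cdot536^2$. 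Thus the first derivative genuinely jumps at $\alpha_1$.

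At the junction $\tfrac{1}{2}$ one checks $f_2(\tfrac{1}{2})=f_3(\tfrac{1}{2})=6$ for continuity. The first derivatives then agree, $f_2'(\tfrac{1}{2})=\sqrt{2/(1/2)}=2$ and $f_3'(\tfrac{1}{2})=\frac{\sqrt6}{\sqrt{3/2}}=2$, so $\rho_{\text{DB}}(\Gamma_\alpha)$ is $C^1$ across $\tfrac{1}{2}$. Passing to second derivatives, $f_2''(\alpha)=-\frac{1}{\sqrt2\,\alpha^{3/2}}$ and $f_3''(\alpha)=-\frac{\sqrt6}{2(1+\alpha)^{3/2}}$ give $f_2''(\tfrac{1}{2})=-2$ and $f_3''(\tfrac{1}{2})=-\tfrac{2}{3}$, which differ; hence the second derivative jumps at $\tfrac{1}{2}$ while the first remains continuous, exactly as claimed.

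The only genuinely delicate step is the algebra pinning down $\alpha_1$ as the exact crossover of $f_1$ and $f_2$ together with the sign of $f_1'(\alpha_1)-f_2'(\alpha_1)$; because the comparison $765$ versus $536\sqrt2$ is quite tight, I would present the squaring argument explicitly rather than lean on a decimal estimate. Everything else is routine differentiation and evaluation, so no further machinery is needed beyond part \ref{part1}.
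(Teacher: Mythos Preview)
Your proposal is correct and is precisely the computation the paper leaves implicit: the paper gives no proof of this corollary beyond the phrase ``Immediately from Theorem~\ref{thm:maintheorem} part~\ref{part1},'' so what you have written is simply the routine verification that statement invites. Your derivative calculations at $\alpha_1$ and at $\tfrac12$ all check out, including the nested-radical algebra identifying $\sqrt{\alpha_1}=\tfrac{20-12\sqrt2}{7}$ and the tight inequality $765>536\sqrt2$.
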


Before we explain the proof strategy we define in Figure \ref{fig:falpha} a finite family of set types abbreviated $\mathfs{F}_\alpha\subset\gamma_\alpha$: 
\begin{figure}[h!]
\begin{tikzpicture}[scale=0.4]

\draw[blue,    thick] (0,4) to (3,4);
\draw (1.5,3.5) node {$a$};
\draw[blue,    thick] (0,0) to (3,0);
\draw (.5,2) node{$b$};
\draw[blue,    thick] (0,0) to (0,4);
\draw (4.5,2.6) node{$d$};
\draw[blue,    thick] (3,0) to (3,4);
\draw (4,3.5) node{$c$};
\draw (3,7.25) node{${\tiny\begin{array}{l} \textrm{Kissing rectangles} \\\rdb=2(a+b+c)+d\\ ab=\alpha\textrm{ or 1}\\ cd=\textrm{1 or }\alpha \\b\ge d\\ a,b,c,d>0 \end{array}}$};


\draw[red,   thick] (3,1.5) to (5,1.5);
\draw[red,   thick] (5,1.5) to (5,4);
\draw[red,   thick] (5,4) to (3,4);


\draw[blue,   thick] (7,4) to (10,4); 
\draw (7.5,2) node{$a$};
\draw[blue,   thick] (7,0) to (10,0);
\draw (8.5,.5) node{$b$};
\draw[blue,   thick] (7,0) to (7,4);
\draw (11.5,2) node{$c$};
\draw[blue,   thick] (10,0) to (10,4);
\draw (11,4.5) node{$d$};
\draw[red,   thick] (10,0) to (12,0);
\draw[red,   thick] (12,0) to (12,5); 
\draw[red,   thick] (12,5) to (7,5);
\draw[red,   thick] (7,5) to (7,4);
\draw (11,7.25) node{${\tiny \begin{array}{l} \textrm{Embedded rectangle}\\\rdb=2(c+d)+a+b\\ ab=\alpha\textrm{ or 1}\\ c\ge a,b\ge d\\ cd-ab=\textrm{1 or }\alpha\\ a,b,c,d>0 \end{array}}$};
\draw[blue,   thick] (15,4) to (18,4); 
\draw (15.5,2) node{$a$};
\draw[blue,   thick] (15,0) to (18,0);
\draw (16.5,.5) node{$b$};
\draw[blue,   thick] (15,0) to (15,4);
\draw (19.25,1.5) node{$f$};
\draw[blue,   thick] (18,0) to (18,4);
\draw (16.5,4.5) node{$c$};
\draw (17,3.65) node{$d$};
\draw (17.7,2.5) node{$e$};
\draw[red,   thick] (18,1) to (20,1);
\draw[red,   thick] (20,1) to (20,5); 
\draw[red,   thick] (20,5) to (16,5);
\draw[red,   thick] (16,5) to (16,4);
\draw (21,7.25) node{${\tiny \begin{array}{l} \textrm{General case}\\\rdb=2(a+b+c+f)+(d+e)\\ ab=\textrm{1 or }\alpha\\ b\ge d,a\ge e\\ cd+cf+ef=\alpha\textrm{ or 1}\\ a,b,c,d,e,f>0 \end{array}}$};

\end{tikzpicture}\caption{\label{fig:falpha}}
\end{figure}

While the general case encapsulates the other cases, the kissing rectangles and embedded rectangle cases are important enough to annotate and include with names. We will be referring to these annotations later in the paper.


$~$

The strategy for proving Theorem \ref{thm:maintheorem} follows 3 steps:
\begin{enumerate}
\item Begin with any two sets $(A,B)\in \gamma_{\alpha}$, and find sets $(\tilde{A},\tilde{B})\in\mathfs{F}_{\alpha}$ with $\rho_{\text{DB}}(\tilde{A},\tilde{B})\le \rho_{\text{DB}}(A,B)$. This part is done in Section \ref{sec:optimalsets}.
\item Since the sets in $\mathfs{F}_\alpha$ are very simple to analyze, and the family is finite, we can show the existence of $$\arg\inf\{\rho_{\text{DB}}({A},{B}):(A,B)\in\mathfs{F}_\alpha\}.$$ This part is done in Section \ref{sec:KKT}.

\item Finally by the previous points, these sets achieve the infimum over all of $\gamma_\alpha$ proving the existence of an element in $\Gamma_\alpha$. Moreover we get the phase transitions in $\alpha$ and show non uniqueness for the first phase transition. This is done in Section \ref{sec:proof}.
\end{enumerate}

\section{Finding the sets in $\mathfs{F}(\alpha)$}\label{sec:optimalsets}
Our goal in this section is to find elements of $\mathfs{F}_\alpha$ with a smaller double bubble perimeter than given sets $A$ with $\mu(A)=1$, and $B$ with $\mu(B)=\alpha$.  

\begin{definition}\label{def:aboxnotations}
\bae
&A^{\square}:=[a_{{\rm left}},a_{{\rm right}}]\times[a_{{\rm bottom}},a_{{\rm top}}]\text{, where} \\
&a_{{\rm left}}=\inf\{x:(x,y)\in A\text{ for some }y\in\mathbb{R}\}\\
&a_{{\rm right}}=\sup\{x:(x,y)\in A\text{ for some }y\in\mathbb{R}\}\\
&a_{{\rm bottom}}=\inf\{y:(x,y)\in A\text{ for some }x\in\mathbb{R}\}\\
&a_{{\rm top}}=\sup\{y:(x,y)\in A\text{ for some }x\in\mathbb{R}\}
\eae
\end{definition}


\begin{lemma} \label{lem:squareboundary}
$\rho(A^{\square})\leq\rho(A)$ and $\mu(A)\leq\mu(A^{\square})$. 
\end{lemma}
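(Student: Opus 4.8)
The plan is to treat the two inequalities separately; the measure bound is immediate and the perimeter bound carries the content. For the measure inequality, I would note that every $(x,y)\in A$ satisfies $a_{{\rm left}}\le x\le a_{{\rm right}}$ and $a_{{\rm bottom}}\le y\le a_{{\rm top}}$ directly from Definition \ref{def:aboxnotations}, so $A\subseteq A^{\square}$ and hence $\mu(A)\le\mu(A^{\square})$ by monotonicity of Lebesgue measure.

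For the perimeter inequality the key structural observation is that the $\ell^1$ length decouples into the total variations of the two coordinate functions. Writing $\partial A=\lambda([0,1])$ with $\lambda(t)=(x(t),y(t))$, I would establish directly from the supremum defining $\rho$ that
$$\rho(\partial A)=\mathrm{TV}_{[0,1]}(x)+\mathrm{TV}_{[0,1]}(y),$$
where $\mathrm{TV}$ denotes total variation. The ``$\le$'' direction is immediate since each partition sum splits as $\sum_i|x(t_{i+1})-x(t_i)|+\sum_i|y(t_{i+1})-y(t_i)|$; for ``$\ge$'' one picks near-optimal partitions for $x$ and for $y$ separately and passes to their common refinement, using that partition sums only increase under refinement.

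Next I would compute the right-hand side for the box. Since $\partial A^{\square}$ consists of two horizontal edges of length $W:=a_{{\rm right}}-a_{{\rm left}}$ and two vertical edges of length $H:=a_{{\rm top}}-a_{{\rm bottom}}$, and a horizontal (resp.\ vertical) segment contributes only its $x$- (resp.\ $y$-) extent to the $\ell^1$ length, we get $\rho(A^{\square})=2W+2H$. It therefore suffices to prove the two symmetric lower bounds $\mathrm{TV}(x)\ge 2W$ and $\mathrm{TV}(y)\ge 2H$.

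The real work, and the step I expect to be the main obstacle, is the lower bound on $\mathrm{TV}(x)$ for the closed boundary curve, which needs two facts. First, the boundary actually attains the extreme horizontal values, even though $a_{{\rm left}},a_{{\rm right}}$ are defined via $A$ rather than $\partial A$: because $A$ is open and bounded (its boundary being a rectifiable closed curve), a sequence in $A$ whose $x$-coordinates tend to $a_{{\rm right}}$ has a subsequential limit in $\overline{A}$ with $x=a_{{\rm right}}$, and this limit cannot lie in the open set $A$ (else a neighborhood, hence points of larger $x$-coordinate, would lie in $A$), so it lies on $\partial A$; the same argument handles $a_{{\rm left}}$. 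Thus there are times $t_{\min},t_{\max}\in[0,1]$ with $x(t_{\min})=a_{{\rm left}}$ and $x(t_{\max})=a_{{\rm right}}$, while $x(t)\in[a_{{\rm left}},a_{{\rm right}}]$ for all $t$. Second, $\lambda$ is closed, so $x(0)=x(1)$. Evaluating the partition sum on the ordered nodes among $\{0,t_{\min},t_{\max},1\}$ and using $x(0)=x(1)$ together with $a_{{\rm left}}\le x(0)\le a_{{\rm right}}$, a short telescoping estimate gives $\mathrm{TV}(x)\ge 2(a_{{\rm right}}-a_{{\rm left}})=2W$ in every ordering of the interior times. The identical argument in the vertical direction yields $\mathrm{TV}(y)\ge 2H$, and combining the displayed identity with these two bounds gives $\rho(\partial A)=\mathrm{TV}(x)+\mathrm{TV}(y)\ge 2W+2H=\rho(A^{\square})$, completing the proof.
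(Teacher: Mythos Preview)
Your proof is correct, but the route differs from the paper's. The paper argues topologically: it locates points $p_1,p_2$ where $\partial A$ touches the top and bottom lines $H_1,H_2$, then invokes simplicity of the closed curve $\partial A$ to produce \emph{two disjoint subpaths} $\lambda_1,\lambda_2$ from $p_1$ to $p_2$, each with vertical component at least $D_{H_1,H_2}$; the horizontal direction is handled symmetrically, and summing gives $\rho(\partial A)\ge 2D_{H_1,H_2}+2D_{V_1,V_2}=\rho(A^{\square})$. Your argument instead exploits the algebraic structure of the $\ell^1$ length, writing $\rho(\partial A)=\mathrm{TV}(x)+\mathrm{TV}(y)$ and deducing $\mathrm{TV}(x)\ge 2W$ purely from the facts that $x$ attains $a_{{\rm left}},a_{{\rm right}}$ and that $x(0)=x(1)$; simplicity plays no role.

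Your approach is cleaner for this particular lemma and uses a strictly weaker hypothesis (closedness rather than simplicity). The paper's ``disjoint crossings'' viewpoint, on the other hand, is what drives the harder Lemmas~\ref{lem:containedrec}--\ref{lem:twocorners}, where one must extract \emph{three} independent crossings of a strip and argue via planarity that a crossing of $\partial B$ cannot coincide with both crossings of $\partial A$. So the paper's argument here is really a warm-up for the counting machinery used later, whereas your total-variation argument, while elegant, would not extend as directly to those three-crossing estimates.
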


\begin{proof}  By definition, $A\subset A^{\square}$.  Therefore, by monotonicity of Lebesgue measure, $\mu(A)\leq\mu(A^{\square})$.

Now we show that $\rho(A^{\square})\leq\rho(A)$.  Let $H_1$ be the horizontal line passing through $a_{\rm top}$, and $H_2$ be the horizontal line passing through $a_{\rm bottom}$.  Similarly define $V_1$ and $V_2$ to be the vertical lines passing through $a_{\rm left}$ and $a_{\rm right}$, respectively.  Let $D_{H_1,H_2}$be the distance from $H_1$ to $H_2$, and $D_{V_1,V_2}$ be the distance between $V_1$ and $V_2$.  $\partial A$ must touch $H_1$ in at least one point, say $p_1$, and similarly must touch $H_2$ in at least one point, say $p_2$. See Figure \ref{fig:contained2} for an illustration of the notations.

\begin{figure}[H]
\begin{tikzpicture}[scale=0.80]

\node at (3.4,1.4) {$\textcolor{blue}{A^{\square}}$};
\node at (1.9,2.2) {$\textcolor{red}{A}$};
\node at (1.8,3.2) {$\textcolor{red}{\partial A}$};
\node at (4.3,4.2) {$p_1$};
\node at (1.5,0.8) {$p_2$};
\node at (3,2.2) {$\lambda_1$};
\node at (1.4,1.8) {$\lambda_2$};

\draw[blue, thick] (1,1) to (1,4);
\draw[blue, thick] (1,4) to (4,4);
\draw[blue, thick] (4,4) to (4,1);
\draw[blue, thick] (4,1) to (1,1);

\draw [red,thick] plot [smooth, tension=0.5] coordinates {(4,4) (1.5,2.6)(1,2.5) (1.4,1)(2.5,2)(4,4)};




\draw[dotted, thick] (-1,4) to (6,4);
\node at (5.4,4.3) {$H_1$};


\draw[dotted, thick] (-1,1) to (6,1);
\node at (5.4,1.3) {$H_2$};



\draw[<->,thick] (-0.9,1.02) to (-0.9,3.98);
\node at (-1.6,2.5) {$D_{H_1,H_2}$};



\draw[dotted, thick] (1,-1) to (1,6);
\node at (1.3,5.8) {$V_1$};


\draw[dotted, thick] (4,-1) to (4,6);
\node at (4.3,5.8) {$V_2$};

\draw[<->,thick] (1.02,-0.98) to (3.98,-0.98);
\node at (2.5,-1.3) {$D_{V_1,V_2}$};


\end{tikzpicture}
\caption{~\label{fig:contained2}}
\end{figure}

Since $A$ is open and $\partial A$ is simple, there must be at least two disjoint paths in $\partial A$ from $p_1$ to $p_2$, abbreviate them $\lambda_i(t)=(x_i(t),y_i(t)):[0,1]\to\BR^2$, for $i\in\{1,2\}$.  The vertical portion of these paths must be at least $D_{H_1,H_2}$ i.e. for any $0\le t_1\le t_2\le\cdots\le t_n\le 1$, and $i\in\{1,2\}$ we have that
$$
\sum_{j=1}^N\big(\left|y_i(t_{j+1})-y_i(t_j)\big|\right)\ge D_{H_1,H_2}$$
Similarly there must be at least two disjoint paths from a point on $V_1$ to a point on $V_2$, whose horizontal distances must be at least $D_{V_1,V_2}$.  We have so far found that the boundary of $A$ must measure at least $2\cdot D_{V_1,V_2}+2\cdot D_{H_1,H_2}$, which is the length of the boundary of $A^{\square}$.  That is, $\rho(A^{\square})\leq\rho(A)$, as claimed.

\end{proof}

\begin{lemma}\label{lem:containedrec}
If $(A,B)\in\gamma_{\alpha}$, and $B^{\square}\subset A^{\square}$, then there exists $(\tilde{A},\tilde{B})\in\mathfs{F}_\alpha$, such that $\rdb(\tilde{A},\tilde{B})\leq\rdb(A,B)$, and $(\tilde{A},\tilde{B}$) are either kissing rectangles or embedded rectangles. 

\begin{center}
\begin{tikzpicture}[scale=0.4]

\draw[blue,  thin] (0,4) to (3,4) node[anchor=north east]{$\tilde{B}$};
\draw[blue,  thin] (0,0) to (3,0);
\draw[blue,  thin] (0,0) to (0,4);
\draw[blue,  thin] (3,0) to (3,4);

\draw[red, thin] (3,0) to (5,0);
\draw[red, thin] (5,0) to (5,4) node[anchor=north east]{$\tilde{A}$};
\draw[red, thin] (5,4) to (3,4);


\draw[blue,  thin] (7,4) to (10,4) node[anchor=north east]{$\tilde{B}$};
\draw[blue,  thin] (7,0) to (10,0);
\draw[blue,  thin] (7,0) to (7,4);
\draw[blue,  thin] (10,0) to (10,4);

\draw[red, thin] (10,0) to (12,0);
\draw[red, thin] (12,0) to (12,6);
\draw[red, thin] (12,6) to (7,6) node[anchor=north west]{$\tilde{A}$};
\draw[red, thin] (7,6) to (7,4);

\end{tikzpicture}

\end{center}

\end{lemma}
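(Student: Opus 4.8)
The plan is to pass from $(A,B)$ to rectangles using Lemma \ref{lem:squareboundary}, after first rewriting $\rdb$ as a sum of nonnegative lengths. Write $A^\square=[a_{\rm left},a_{\rm right}]\times[a_{\rm bottom},a_{\rm top}]$ with width $w=a_{\rm right}-a_{\rm left}$ and height $h=a_{\rm top}-a_{\rm bottom}$, and let $w',h'$ be the width and height of $B^\square$. Since $A,B$ are disjoint and $B\subseteq B^\square\subseteq A^\square$, we have $A\cup B\subseteq A^\square$, so $(A\cup B)^\square=A^\square$ and $wh=\mu(A^\square)\ge\mu(A)+\mu(B)=1+\alpha$; this area surplus is what will give us room to place the target rectangles.

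First I would record the splitting $\rdb(A,B)=\rho(\partial(A\cup B))+\rho(\partial A\cap\partial B)$. Decomposing $\partial A$ and $\partial B$ into the shared arcs $S=\partial A\cap\partial B$ and the remaining ``exposed'' arcs $E_A,E_B$, the definition $\rdb(A,B)=\rho(\partial A)+\rho(\partial B)-\rho(S)$ becomes $\rho(E_A)+\rho(E_B)+\rho(S)$, while $E_A\cup E_B=\partial(A\cup B)$. The value of this identity is that it expresses $\rdb$ as a sum of nonnegative terms, so it suffices to bound the outer piece and the shared piece separately from below and then match each by an explicit rectangle configuration.

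For the outer piece, the argument of Lemma \ref{lem:squareboundary} applied to $A\cup B$ (whose bounding box is $A^\square$) yields $\rho(\partial(A\cup B))\ge\rho(\partial A^\square)=2(w+h)$. For the shared piece I would project $S$ onto the two coordinate axes: since the $\ell^1$ length dominates total horizontal plus total vertical variation, $\rho(S)\ge\sigma_x+\sigma_y$, where $\sigma_x,\sigma_y$ denote these variations. With these two bounds in hand I would build the comparison set: choose a rectangle $R$ of dimensions $C\times D$ with $C\le w$, $D\le h$ and $CD=1+\alpha$ (possible since $wh\ge1+\alpha$), place $\tilde B$ as a rectangle of area $\alpha$ in a corner of $R$ with its two inner sides chosen so that their total length does not exceed $\rho(S)$, and set $\tilde A=R\setminus\overline{\tilde B}$, an $L$-shaped region of area exactly $1$. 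When $B^\square$ meets a full face of $A^\square$ this collapses to the kissing-rectangles case, and otherwise it is the embedded-rectangle case, so in either case $(\tilde A,\tilde B)\in\mathfs{F}_\alpha$ with the geometry of Figure \ref{fig:falpha}. By construction $\rho(\partial(\tilde A\cup\tilde B))=2(C+D)\le2(w+h)$ and the shared length of $(\tilde A,\tilde B)$ is at most $\rho(S)$, whence $\rdb(\tilde A,\tilde B)\le\rho(\partial(A\cup B))+\rho(S)=\rdb(A,B)$.

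The hard part will be the shared-boundary estimate together with the case analysis of how $B^\square$ sits inside $A^\square$. Because $A$ and $B$ need not fill $A^\square$, empty space can appear between them, and I must check two things at once: that such slack only \emph{increases} $\rho(\partial(A\cup B))+\rho(S)$ (so the crude bound $2(w+h)$ is not defeated when $\rho(S)$ is small), and that the rectangle $\tilde B$ of area exactly $\alpha$ can be fit into a corner of $R$ with its two inner sides dominated by the variations of $S$ in the directions along which $A$ actually separates $B$ from the exterior. Respecting the side inequalities built into $\mathfs{F}_\alpha$ (such as $c\ge a$ and $b\ge d$) while simultaneously pinning the two volumes to $\mu(\tilde A)=1$ and $\mu(\tilde B)=\alpha$ and not increasing either piece of the perimeter is the delicate bookkeeping the argument must carry out.
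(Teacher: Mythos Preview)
Your decomposition $\rdb(A,B)=\rho(\partial(A\cup B))+\rho(\partial A\cap\partial B)$ is appealing, but the construction step breaks down precisely where you flag it as ``the hard part.'' The problem is not just bookkeeping: the shared length $\rho(S)$ can be arbitrarily small (even zero, since nothing in the hypotheses forces $\overline A$ and $\overline B$ to touch), yet you need a rectangle $\tilde B$ of area $\alpha$ whose two inner sides sum to at most $\rho(S)$. That forces $\rho(S)\ge 2\sqrt{\alpha}$, which simply fails in general. When $\rho(S)$ is tiny the slack does sit in $\rho(\partial(A\cup B))$, but your lower bound $2(w+h)$ on that piece throws the slack away, and you cannot recover it on the shared side. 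So the two halves of the estimate are not compatible with the two halves of the construction.

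The paper avoids this by \emph{not} separating outer and shared boundary. Instead it runs a direct crossing count: inside the horizontal strip of height $h'$ determined by $B^\square$ it finds \emph{three} disjoint vertical crossings in $\partial A\cup\partial B$ (two from $\partial A$, and a third either from an extra crossing of $\partial A$ or from the side of $\partial B$ that cannot meet $\partial A$ by planarity), and similarly three horizontal crossings of width $w'$. This yields the lower bound $\rdb(A,B)\ge 2(w+h)+(w'+h')$, with the extra term expressed through the \emph{dimensions of $B^\square$} rather than through $\rho(S)$. Since $w'h'\ge\mu(B)=\alpha$, one can always place a rectangle of area $\alpha$ with sides at most $w'\times h'$ in a corner of (a shrunk) $A^\square$; no lower bound on the shared boundary is needed. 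A secondary issue: your application of Lemma~\ref{lem:squareboundary} to $A\cup B$ is not justified as stated, because $A\cup B$ need not be connected, let alone bounded by a single simple closed curve. The paper's path-counting argument sidesteps this as well by working directly with $\partial A$ and $\partial B$ separately.
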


\begin{proof}

If $B^{\square}$ is contained in $A^{\square}$ there are several options to consider, namely that one, two, three, or no edges of $B^{\square}$ could touch the same number of edges in $A^{\square}$.  Let's take the instance when none of the edges of $B^{\square}$ touch any of the edges of $A^{\square}$, that is, $b_{\rm top}<a_{\rm top}$, $b_{\rm right}<a_{\rm right}$, $a_{\rm left}<b_{\rm left}$, and $a_{\rm bottom}<b_{\rm bottom}$. From this case we can easily derive the results for the other cases. Let $H_1=\mathbb{R}\times\{a_{\rm top}\}$, $H_2=\mathbb{R}\times\{b_{\rm top}\}$, $H_3=\mathbb{R}\times\{b_{\rm bottom}\}$, and $H_4=\mathbb{R}\times\{a_{\rm bottom}\}$.  Now we define the distance between $H_1$ and $H_2$ to be $D_{H_1,H_2}=a_{\rm top}-b_{\rm top}$, the distance between $H_2$ and $H_3$ to be $D_{H_2,H_3}=b_{\rm top}-b_{\rm bottom}$, and the distance between $H_3$ and $H_4$ to be $D_{H_3,H_4}=b_{\rm bottom}-a_{\rm bottom}$.  Similarly we call the vertical line through $a_{\rm left}$ $V_1$, the vertical line through $b_{\rm left}$ $V_{2}$, the vertical line through $b_{\rm right}$ $V_3$, and the vertical line through $a_{\rm right}$ $V_4$, naming the distances between these lines just as before.  See Figure \ref{fig:contained} for illustration. 

\begin{figure}[H]
\begin{tikzpicture}[scale=0.80]

\node at (5.5,5.5) {$A^{\square}$};
\draw[red, thick] (0,0) to (0,6);
\draw[red, thick] (0,6) to (6,6);
\draw[red, thick] (6,6) to (6,0);
\draw[red, thick] (6,0) to (0,0);

\node at (1.4,1.4) {$B^{\square}$};
\draw[blue, thick] (1,1) to (1,4);
\draw[blue, thick] (1,4) to (4,4);
\draw[blue, thick] (4,4) to (4,1);
\draw[blue, thick] (4,1) to (1,1);


\draw[dotted, thick] (-3,6) to (9,6);
\node at (8.4,6.3) {$H_1$};


\draw[dotted, thick] (-3,4) to (9,4);
\node at (8.4,4.3) {$H_2$};


\draw[dotted, thick] (-3,1) to (9,1);
\node at (8.4,1.3) {$H_3$};


\draw[dotted, thick] (-3,0) to (9,0);
\node at (8.4,0.3) {$H_4$};


\draw[<->,thick] (-2.9,4.02) to (-2.9,5.98);
\node at (-3.6,5) {$D_{H_1,H_2}$};


\draw[<->,thick] (-2.9,1.02) to (-2.9,3.98);
\node at (-3.6,2.5) {$D_{H_2,H_3}$};


\draw[<->,thick] (-2.9,0.02) to (-2.9,0.98);
\node at (-3.6,0.5) {$D_{H_3,H_4}$};


\draw[dotted, thick] (0,-2) to (0,8);
\node at (0.3,7.8) {$V_1$};


\draw[dotted, thick] (1,-2) to (1,8);
\node at (1.3,7.8) {$V_2$};


\draw[dotted, thick] (4,-2) to (4,8);
\node at (4.3,7.8) {$V_3$};


\draw[dotted, thick] (6,-2) to (6,8);
\node at (6.3,7.8) {$V_4$};


\draw[<->,thick] (0.02,-1.98) to (0.98,-1.98);
\node at (0.5,-2.3) {$D_{V_1,V_2}$};


\draw[<->,thick] (1.02,-1.98) to (3.98,-1.98);
\node at (2.5,-2.3) {$D_{V_2,V_3}$};


\draw[<->,thick] (4.02,-1.98) to (5.98,-1.98);
\node at (5,-2.3) {$D_{V_3,V_4}$};

\end{tikzpicture}
\caption{~\label{fig:contained}}
\end{figure}

Now, in $\partial A\cup\partial B$ we need to find three paths with vertical lengths of $D_{H_2,H_3}$, and in $\partial A$ two paths with vertical lengths of $D_{H_1,H_2}$ and the same for $D_{H_3,H_4}$.  Further, we need these paths to be disjoint so we don't count anything more than once.  Then we would do the analogous process for horizontal distances.  First, we find points $p_1\in H_1\cap\partial A$, and $p_2\in H_4\cap\partial A$.  There must be two distinct paths in $\partial A$ between these two points, which we call $\lambda_i(t)=(x_i(t),y_i(t)):[0,1]\rightarrow\mathbb{R}^2$, for $i\in\{1,2\}$, both of which must have vertical distance of at least $D_{H_1,H_2}+D_{H_2,H_3}+D_{H_3,H_4}$. That is, for any $0\leq t_1\leq t_2\leq...\leq t_N\leq1$, we have for $i\in\{1,2\}$, 

$$\sum_{j=1}^N\left(|y_i(t_{j+1})-y_i(t_j)|\right)\geq D_{H_1,H_2}+D_{H_2,H_3}+D_{H_3,H_4}.$$

To complete our search for enough vertical length, it remains to find one final path with vertical length $D_{H_2,H_3}$ that we have yet to count among these crossings. For a path $\gamma:[0,1]\to \BR^2$ we say that a subpath $\gamma[a,b]$ is a crossing of $S:=\mathbb{R}\times(b_{\rm bottom},b_{\rm top})$ if $\gamma(a)\in H_2$,  $\gamma(b)\in H_3$ and for all $a<t<b$, $\gamma(t)\notin \{H_2,H_3\}$ (or in the other direction). If $\partial A$ contains more than two crossings, then each of these crossings must have vertical length at least $D_{H_2,H_3}$, and we are done. So we may assume that $\partial A$ only contains two crossings of $S$. We denote these two crossings $\xi_1,\xi_2$.  
Since $\partial A$ contains exactly two crossings of $S$, $S\setminus \xi_1\cup\xi_2$ consists of three open sets, exactly two of which are unbounded and have joint boundary with both $H_2$ and $H_3$.  We call these two open sets $S_1$ and $S_2$.  Since $B$ is open and connected and contained in $S$, $B$ must be contained in one of these unbounded open sets, say w.l.o.g. $S_2$ (note that if there are more than 2 crossings then $B$ can be contained in a bounded set). Since $B$ is open and $\partial B$ is rectifiable, there are at least two distinct paths in $\partial B$ from $H_2$ to $H_3$, which we call $\lambda_3=(x_3(t),y_3(t))$, and $\lambda_4=(x_4(t),y_4(t))$.  Both of these paths must be in $\overline{S_2}$. By planarity only one of these paths might intersect $\xi_2$ say w.l.o.g. $\lambda_3$.  This means that we have not counted the vertical part of $\lambda_4$, and it must have vertical length at least $D_{H_2,H_3}$.  That is to say, that for any $0\leq t_1\leq t_2\leq...\leq t_N\leq1$, we have 

$$\sum_{j=1}^N\left(|y_4(t_{j+1})-y_4(t_j)|\right)\geq D_{H_2,H_3}.$$

This is our third such length, and we are done finding vertical lengths.  We need only find horizontal lengths now.  But this is done in exactly the same manner as in our search for vertical lengths.  We could even rotate our figures $90$ degrees either left or right, so that vertical lines become horizontal and vice versa, and perform the exact same proof as above.

We now construct our figure.  First, we have found a total length of 
$$2\cdot\left(D_{H_1,H_2}+D_{H_3,H_4}+D_{V_1,V_2}+D_{V_3,V_4}\right)+3\cdot\left(D_{h_2,H_3}+D_{V_2,V_3}\right).$$  
This gives us enough length to construct $A^{\square}$ and still have left over a total length of $D_{H_2,H_3}+D_{V_2,V_3}$.  With these lengths, we construct a box in the corner of $A^{\square}$ with dimensions $D_{H_2,H_3}\times D_{V_2,V_3}$.  This box, which we call $\tilde{B}$ in the corner has volume the same as $B^{\square}$, and therefore volume at least $\alpha$.  We can shrink it easily so that it has volume exactly $\alpha$, and by abuse of notation still call this possibly smaller rectangle $\tilde{B}$.  On the other hand, $A\cup B\subset A^{\square}$, and therefore $\mu(A^{\square})\geq1+\alpha$.  Therefore, $\mu(A^{\square}\setminus\tilde{B})\geq1+\alpha-\alpha$.  So we can easily move the sides of $A^{\square}$ that don't share joint boundary with $\tilde{B}$ inwards until the volume is exactly $1$.  This set we call $\tilde{A}$, and have completed our construction. 

Now, suppose that $b_{\rm bottom}=a_{\rm bottom}$, or $b_{\rm left}=a_{\rm left}$, etc.  That is one of the sides of $B^{\square}$ is contiguous with one of the lines of $A^{\square}$.  The process would be as above, except we would have $H_3=H_4$.  This means we wouldn't have to find two paths with vertical length $D_{H_3,H_4}$.  The rest of the proof would be the same.  Similarly, if two sides of $B^{\square}$ are contiguous with two sides of $A^{\square}$, say $a_{\rm bottom}=b_{\rm bottom}$ and $a_{\rm left}=b_{\rm left}$, then we wouldn't have to find paths with vertical length $D_{H_3,H_4}$ and we wouldn't have to find paths with horizontal length $D_{V_1,V_2}$.  The rest of the proof would be the same.

\end{proof}

The previous lemma took into account all of the cases when all four corners of $B^{\square}$ are contained in $A^{\square}$.  The only other two options are if two or one corner of $B^{\square}$ is contained in $A^{\square}$.

\begin{lemma}\label{lem:oneortwocorners}
If $(A,B)\in\gamma_{\alpha}$, and exactly one corner of $B^{\square}$ is contained in $A^{\square}$, then there exists $(\tilde{A},\tilde{B})\in\mathfs{F}_\alpha$, such that $\rdb(\tilde{A},\tilde{B})\leq\rdb(A,B)$.

\end{lemma}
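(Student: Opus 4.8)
The plan is to reimport the length-accounting argument of Lemma \ref{lem:containedrec}, now adapted to the \emph{corner overlap} geometry forced by the hypothesis. First I would fix a canonical position: if exactly one corner of $B^{\square}$ lies in $A^{\square}$, then up to a reflection of the plane (which preserves $\ell^1$ length, Lebesgue measure, and membership in $\gamma_\alpha$) that corner is the bottom-left corner of $B^{\square}$, so that $a_{\rm left}\le b_{\rm left}\le a_{\rm right}<b_{\rm right}$ and $a_{\rm bottom}\le b_{\rm bottom}\le a_{\rm top}<b_{\rm top}$; equivalently the two boxes meet only in the rectangle $[b_{\rm left},a_{\rm right}]\times[b_{\rm bottom},a_{\rm top}]$. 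As in the contained case I would record the four horizontal lines $H_1,\dots,H_4$ through the heights $b_{\rm top}>a_{\rm top}\ge b_{\rm bottom}\ge a_{\rm bottom}$ and the four vertical lines $V_1,\dots,V_4$ through $a_{\rm left}\le b_{\rm left}\le a_{\rm right}<b_{\rm right}$, writing $v_1,v_2,v_3$ and $h_1,h_2,h_3$ for the three consecutive vertical and horizontal gaps, so that $v_2,h_2$ are the height and width of the overlap.

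The heart of the argument is to bound $\rho_{\text{DB}}(A,B)=\rho(\partial A\cup\partial B)$ from below by summing the $\ell^1$ length of a family of pairwise disjoint monotone arcs inside $\partial A\cup\partial B$, using that in the $\ell^1$ norm the length of a curve is its vertical total variation plus its horizontal total variation, so the two directions may be estimated independently and arcs lying in different strips are automatically disjoint. In the strip $H_1$--$H_2$ only $B$ is present and $\partial B$ supplies two disjoint crossings of vertical span at least $v_1$; symmetrically $H_3$--$H_4$ gives two crossings of span at least $v_3$ from $\partial A$. In the overlap strip $H_2$--$H_3$ I would extract \emph{three} pairwise disjoint crossings, namely the left side of $A$, the right side of $B$, and one middle crossing (the right side of $A$, the left side of $B$, or the shared interface when $A$ and $B$ are tangent there), invoking the disjointness and connectedness of $A,B$ together with a planarity and crossing-count argument in the spirit of the treatment of $\xi_1,\xi_2$ and $S_1,S_2$ in Lemma \ref{lem:containedrec}. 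This yields vertical length at least $2v_1+3v_2+2v_3$, and the figure rotated by $90^{\circ}$ gives horizontal length at least $2h_1+3h_2+2h_3$, so that
\[
\rho_{\text{DB}}(A,B)=\rho(\partial A\cup\partial B)\ \ge\ 2(v_1+v_2+v_3)+2(h_1+h_2+h_3)+v_2+h_2 .
\]

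To turn this budget into an explicit element of $\mathfs{F}_\alpha$, let $U$ denote the bounding box of $A\cup B$, of width $h_1+h_2+h_3$ and height $v_1+v_2+v_3$: the right-hand side above is precisely $\rho(\partial U)+(v_2+h_2)$, and since $A,B\subset U$ are disjoint we have $\mu(U)\ge 1+\alpha$. I would place $\tilde B$ as a corner rectangle of area $\alpha$ inside a copy of $U$ shrunk to area $1+\alpha$, and take $\tilde A$ to be the L-shaped complement of area $1$; this pair is of the general-case type of Figure \ref{fig:falpha}, with double bubble perimeter equal to $\rho(\partial U')$ plus the length of its two-segment interface, where shrinking the free sides inward only decreases the perimeter, exactly as in Lemma \ref{lem:squareboundary} and the construction of Lemma \ref{lem:containedrec}. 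Bounding the interface by the surplus $v_2+h_2$ (and, where it is advantageous, degenerating $\tilde B$ to a full-height or full-width strip so that the pair becomes kissing or embedded rectangles) produces $(\tilde A,\tilde B)\in\mathfs{F}_\alpha$ with $\rho_{\text{DB}}(\tilde A,\tilde B)\le\rho_{\text{DB}}(A,B)$. The degenerate sub-cases in which a side of $B^{\square}$ is flush with a side of $A^{\square}$ (so one of $v_1,v_3,h_1,h_3$ vanishes) are handled by the same computation with the corresponding strip omitted.

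I expect the main obstacle to be the rigorous extraction of the third disjoint crossing in the overlap strip: one must rule out, via planarity and the disjointness of $A$ and $B$, the possibility that the crossings of $\partial A$ and $\partial B$ coincide so thoroughly that fewer than three distinct arcs remain, and one must verify that the middle crossing can be chosen disjoint from the two outer ones even when $A$ and $B$ are mutually tangent along the interface. A secondary but necessary bookkeeping step is confirming that the constructed rectangle and L-shape genuinely satisfy the $\mathfs{F}_\alpha$ constraints (the inequalities $b\ge d$, $a\ge e$ and positivity of all parameters) after the volume-fixing shrink.
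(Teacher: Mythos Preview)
Your lower-bound argument---extracting two crossings in each outer strip and three in the overlap strip, in each direction---is correct and matches the paper's proof closely, including the planarity/crossing-count argument for the third crossing. The obstacle you flag (rigorously isolating that third crossing) is handled exactly as in Lemma~\ref{lem:containedrec} and is not where the difficulty lies.

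The gap is in your construction step. You propose to realise the budget $2(W_0+H_0)+(v_2+h_2)$ by an \emph{embedded rectangle}: a corner rectangle $\tilde B$ of area $\alpha$ inside a shrunk bounding box $U'$. This has $\rho_{\rm DB}=2(W+H)+(w+h)$ with $wh=\alpha$, and you assert that the interface $w+h$ can be ``bounded by the surplus $v_2+h_2$''. But $v_2,h_2$ are the dimensions of $A^\square\cap B^\square$, which can be arbitrarily small while $\alpha$ is fixed (take $A,B$ grazing at a corner), so no rectangle of area $\alpha$ has interface $\le v_2+h_2$ in that regime. Any compensation would have to come from the perimeter saved when shrinking $U\to U'$, and you give no argument that this slack suffices; it is not obvious, and your proposal treats it as bookkeeping.

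The paper sidesteps this entirely by building a different comparison configuration: it takes $A^\square$ as the inner rectangle and $B^\square\setminus A^\square$ as the outer L-shape. This is the ``general case'' shape of Figure~\ref{fig:falpha}, and by construction its $\rho_{\rm DB}$ equals the budget exactly---no interface estimate is needed. The genuine work, which occupies most of the paper's proof, is then \emph{volume-fixing}: one has $\mu(A^\square)\ge 1$ automatically, but $\mu(B^\square\setminus A^\square)$ may be strictly less than $\alpha$, and repairing this without increasing $\rho_{\rm DB}$ requires sliding the L-shape and splitting into two sub-cases (yielding either kissing rectangles or a valid general-case pair). This case analysis is the heart of the lemma and is what your proposal is missing.
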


\begin{proof} For the one corner case we argue that we can find sets, with a better double bubble perimeter and more joint volume than the original shapes, that looks like the general case of Figure \ref{fig:falpha}:  

\begin{figure}[h!]
\begin{tikzpicture}

\draw[blue, ultra thin] (6,0) to (8,0);
\draw[blue, ultra thin] (8,0) to (8,2);
\draw[blue, ultra thin] (8,2) to (7,2);
\draw[blue, ultra thin] (7,2) to (7,3);
\draw[blue, ultra thin] (7,3) to (6,3);
\draw[blue, ultra thin] (6,3) to (6,0);

\draw[red, ultra thin] (7,2) to (10,2);
\draw[red, ultra thin] (10,2) to (10,4);
\draw[red, ultra thin] (10,4) to (7,4);
\draw[red, ultra thin] (7,4) to (7,2);

\end{tikzpicture}\end{figure}

Here, the rectangle can be either $A^{\square}$ or $B^{\square}$, say $A^{\square}$, and the other set is $B^{\square}\setminus A^{\square}$.  Once we create these two sets, we are not necessarily done because the volumes may not be correct.  This may cause somewhat more of a problem than in previous cases, but in any case the method remains similar.

%
%
%
%
%
%

In this proof we are assuming that $\mu(A)=1$, and $\mu(B)=\alpha$. Since $A^{\square}$ and $B^{\square}$ only intersect in one corner, either $a_{\rm top}>b_{\rm top}$, or $b_{\rm top}>a_{\rm top}$.  We can suppose without loss of generality that $a_{\rm top}>b_{\rm top}$.  Let $H_1=\mathbb{R}\times\{a_{\rm top}\}$ be the horizontal line passing through $a_{\rm top}$, $H_2=\mathbb{R}\times\{b_{\rm top}\}$ be the horizontal line passing through $b_{\rm top}$, $H_3=\mathbb{R}\times\{a_{\rm bottom}\}$ be the horizontal line passing through $a_{\rm bottom}$, and $H_4=\mathbb{R}\times\{b_{\rm bottom}\}$ be the horizontal line passing through $b_{\rm bottom}$.  Note that the height of $A^{\square}\cap B^{\square}$ is the same as the distance between $H_2$ and $H_3$, which we will call $D_{H2,H3}$.  Furthermore, let the distance between $H_1$ and $H_2$ be $D_{H_1,H_2}$, and the distance between $H_3$ and $H_4$ be $D_{H_3,H_4}$. Now, let $V_1=\{b_{\rm left}\}\times\mathbb{R}$ be the vertical line passing through $b_{\rm left}$, $V_2=\{a_{\rm left}\}\times\mathbb{R}$ be the vertical line passing through $a_{\rm left}$, $V_3=\{b_{\rm right}\}\times\mathbb{R}$ be the vertical line passing through $b_{\rm right}$, and $V_4=\{a_{\rm right}\}\times\mathbb{R}$ be the vertical line passing through $a_{\rm right}$.  We define the distance between $V_1$ and $V_2$ to be $D_{V_1,V_2}$, the distance between $V_2$ and $V_3$ to be $D_{V_2,V_3}$, and the distance between $V_3$, and $V_4$ to be $D_{V_3,V_4}$.  See Figure \ref{fig:generalcase}.

\begin{figure}[h!]
\begin{center}
\begin{tikzpicture}

\draw[blue, thick] (0,0) to (0,3);
\draw[blue, thick] (0,3) to (2,3);
\draw[blue, thick] (2,3) to (2,2);
\draw[blue, thick] (2,2) to (3,2);
\draw[blue, thick] (3,2) to (3,2);
\draw[blue, thick] (3,2) to (3,0);
\draw[blue, thick] (3,0) to (0,0);


\draw[red, thick] (6,5) to (2,5);
\draw[red, thick] (2,5) to (2,2);
\draw[red, thick] (2,2) to (3,2);

\draw[red, thick] (3,2) to (6,2);
\draw[red, thick] (6,2) to (6,5);

\draw[dotted, thick] (0,-2) to (0,6);
\node (v1) at (-0.3,5.5) {$V_1$};

\draw[dotted, thick] (2,-2) to (2,6);
\node (v1) at (1.7,5.5) {$V_2$};

\draw[dotted, thick] (3,-2) to (3,6);
\node (v1) at (3.3,5.5) {$V_3$};

\draw[dotted, thick] (6,-2) to (6,6);
\node (v1) at (5.7,5.5) {$V_4$};

\draw[dotted, thick] (-2,5) to (7.4,5);
\node (v1) at (7,5.24) {$H_1$};

\draw[dotted, thick] (-2,3) to (7.4,3);
\node (v1) at (7,3.24) {$H_2$};

\draw[dotted, thick] (-2,2) to (7.4,2);
\node (v1) at (7,2.24) {$H_3$};

\draw[dotted, thick] (-2,0) to (7.4,0);
\node (v1) at (7,0.24) {$H_4$};

\draw[<->] (-1.9,3.1) to (-1.9,4.9);
\node (v1) at (-2.56,4.1) {$D_{H_1,H_2}$};

\draw[<->] (-1.9,2.1) to (-1.9,2.9);
\node (v1) at (-2.56,2.5) {$D_{H_2,H_3}$};

\draw[<->] (-1.9,0.1) to (-1.9,1.9);
\node (v1) at (-2.56,1) {$D_{H_3,H_4}$};

\draw[<->] (0.1,-1.9) to (1.9,-1.9);
\node (v1) at (1,-2.2) {$D_{V_1,V_2}$};

\draw[<->] (2.1,-1.9) to (2.9,-1.9);
\node (v1) at (2.54,-2.2) {$D_{V_2,V_3}$};

\draw[<->] (3.1,-1.9) to (5.9,-1.9);
\node (v1) at (4.5,-2.2) {$D_{V_3,V_4}$};

\end{tikzpicture}
\end{center}\caption{\label{fig:generalcase}}
\end{figure}
Notice that to construct $A^{\square}$ and $B^{\square}\setminus A^{\square}$ in this way we need two lengths of $D_{H_1,H_2}$, $D_{H_3,H_4}$, $D_{V_1,V_2}$, and $D_{V_3,V_4}$, as well as three lengths of $D_{H_2,H_3}$, and $D_{V_2,V_3}$.  In other words, for $\rho_{\text{DB}}((A^{\square},B^{\square}\setminus A^{\square}))$ to be at most $\rho_{\text{DB}}((A,B))$, we need to find two vertical lengths of $D_{H_1,H_2}$ in $\partial A$, two vertical lengths of $D_{H_3,H_4}$ in $\partial B$, and three vertical lengths of $D_{H_2,H_3}$ between $\partial A$ and $\partial B$.  Similarly for horizontal lengths.

First, there must be a point $p_1\in H_1\cap\partial A$, and another point $p_2\in H_3\cap\partial A$.  Between these two points there must be at least two disjoint paths, which we call $\lambda_i(t)=(x_i(t),y_i(t)):[0,1]\rightarrow\mathbb{R}^2$, $i\in\{1,2\}$, in $\partial A$, both of which have vertical length at least $D_{H_1,H_2}+D_{H_2,H_3}$.  That is, as before, for any $0\leq t_1\leq t_2\leq...\leq t_N\leq1$, 

$$\sum_{j=1}^N(|y_i(t_{j+1})-y_i(t_j)|)\geq D_{H_1,H_2}+D_{H_2,H_3}$$

Similarly in $\partial B$, we can find two disjoint paths of vertical length at least $D_{H_2,H_3}+D_{H_3,H_4}$.  Since there can be no joint boundary below $H_3$, the portions of these paths that measure at least $D_{H_3,H_4}$ have not been counted yet.  It remains to find a path in $\partial B$ with vertical length at least $D_{H_2,H_3}$ that we have yet to count. 

To find this path we define the infinite strip of height $D_{H_2,H_3}$, $S:=\mathbb{R}\times(a_{\rm bottom},b_{\rm top})$. If $\partial A$ has more than two crossings of $S$, then each of these crossings has vertical length of at least $D_{H_2,H_3}$, as above, and we are done.  So we may assume that $\partial A$ contains exactly two crossings of $S$ (it can't be less than two as we noted above).  Abbreviate these crossings $\xi_1, \xi_2$. In this case, consider $S\setminus \xi_1\cup\xi_2$.  This consists of three open sets, but only two are unbounded sets, and in one of these open sets that we find $B\cap S$.  We call these sets $S_1$ and $S_2$ such that $\xi_1\subset \partial S_1$ and $\xi_2\subset \partial S_2$. Suppose without loss of generality that $B\cap S\subset S_2$. There is at least one point $p_3\in\partial B\cap H_2$, and at least one point $p_4\in\partial B\cap H_3$, and there must be two distinct paths in $\partial B$ from $p_3$ to $p_4$.  We call these paths $\lambda_i(t)=(x_i(t),y_i(t)):[0,1]\rightarrow\mathbb{R}^2$, $i=3,4$. By planarity, only one of the paths, either $\lambda_3$ or $\lambda_4$ can have joint boundary with $\xi_2$, say $\lambda_3$.  This means that we have yet to include the vertical length of $\lambda_4$, and we have just found our third vertical length of $D_{H_2,H_3}$.  That is, for any $0\leq t_1\leq t_2\leq...\leq t_N\leq1$, 

$$\sum_{j=1}^N(|y_4(t_{j+1})-y_4(t_j)|)\geq D_{H_2,H_3}.$$

Finding the three horizontal lengths of $D_{V_2,V_3}$ follows the same argument.  The reason we can be sure that we won't double count anything is because, inside $A^{\square}\cap B^{\square}$ we have only counted vertical distance, and in the argument to find our three horizontal lengths of $D_{V_2,V_3}$ we would only count horizontal lengths.  So, we have proved that $\rho_{\text{DB}}(A^{\square},B^{\square})\leq\rho_{\text{DB}}(A,B)$.  It is clear that $\mu(A\cup B)\leq\mu(A^{\square}\cup B^{\square})$.  Since $A\subset A^{\square}$, $1=\mu(A)\leq\mu(A^{\square})$.  However, it is possible that $\mu(B^{\square}\setminus A^{\square})<\mu(B)=\alpha$.  This we must correct. For this purpose, let us refer to the notation we established in the introduction for the general case.


\begin{figure}[h!]
\begin{tikzpicture}[scale=0.80]
\draw[blue,   thick] (15,4) to (18,4); 
\draw (15.5,2) node{$a$};
\draw[blue,   thick] (15,0) to (18,0);
\draw (16.5,.5) node{$b$};
\draw[blue,   thick] (15,0) to (15,4);
\draw (19.25,1.5) node{$f$};
\draw[blue,   thick] (18,0) to (18,4);
\draw (16.5,4.5) node{$c$};
\draw (17,3.65) node{$d$};
\draw (17.7,2.5) node{$e$};
\node at (16.6,2) {$\textcolor{blue}{A^{\square}}$};
\node at (19,3) {$\textcolor{red}{B^{\square}\setminus A^{\square}}$};
\draw[red,   thick] (18,1) to (20,1);
\draw[red,   thick] (20,1) to (20,5); 
\draw[red,   thick] (20,5) to (16,5);
\draw[red,   thick] (16,5) to (16,4);

\end{tikzpicture}\caption{\label{fig:volumefix1}}
\end{figure}

We look at the two intervals of lengths $c,f$ and assume w.l.o.g that $f>c$ (see Figure \ref{fig:volumefix1}). Now we slide $B^{\square}\setminus A^{\square}$ down towards the longer edge $f$. By doing so we do not enlarge the boundary and we can only increase the area. 
There are two options now:
\begin{enumerate}
\item If $c+d+e\le a$ we get kissing rectangles where the rectangle on the left has greater area than $B^{\square}\setminus A^{\square}$.  (see Figure \ref{fig:volumefix2}).

Next we move area from $A^{\square}$ to the rectangle on the right until reaching the appropriate area $\alpha$. Consider $f'\ge f$ such that $\mu(\tilde{B})=f'\cdot (c+d+e)=\alpha$, and let $b'=b-(f'-f)$. Our final sets are of the general case class as can be seen in Figure \ref{fig:volumefix3}. Since we have only enlarged the total area we are guaranteed that $\mu(\tilde{A})\ge 1$. Reducing the area is easy and we have dealt with this before. 

\begin{figure}[H]
\begin{minipage}{.5\textwidth}
  \centering
\begin{tikzpicture}[scale=0.80]
\draw[blue,   thick] (15,4) to (18,4); 
\draw (15.3,2) node{\tiny{$a$}};
\draw[blue,   thick] (15,0) to (18,0);
\draw (16.5,.5) node{\tiny{$b$}};
\draw[blue,   thick] (15,0) to (15,4);
\draw (19.25,0.5) node{\tiny{$f$}};
\draw[blue,   thick] (18,0) to (18,4);

\draw (19.3,1.5) node{\tiny{$c+d+e$}};
\node at (16.6,2) {$\textcolor{blue}{A^{\square}}$};
\draw[red,   thick] (18,0) to (20,0);
\draw[red,   thick] (20,0) to (20,3); 
\draw[red,   thick] (20,3) to (18,3);

\end{tikzpicture}\caption{\label{fig:volumefix2}}
\end{minipage}%
\begin{minipage}{.5\textwidth}
  \centering
\begin{tikzpicture}[scale=0.80]
\draw[blue,   thick] (15,4) to (18,4); 
\draw (15.3,2) node{\tiny{$a$}};
\draw[blue,   thick] (15,0) to (18,0);
\draw (16,.5) node{\tiny{$b-(f'-f)$}};
\draw[blue,   thick] (15,0) to (15,4);
\draw (18.5,0.5) node{\tiny{$f'$}};
\draw[blue,   thick] (18,3) to (18,4);

\draw (19.3,1.5) node{\tiny{$c+d+e$}};
\node at (16.3,2) {$\textcolor{blue}{\tilde{A}}$};
\node at (18.4,2) {$\textcolor{red}{\tilde{B}}$};
\draw[red,   thick] (17,0) to (20,0);
\draw[red,   thick] (20,0) to (20,3); 
\draw[red,   thick] (20,3) to (17,3);
\draw[red,   thick] (17,0) to (17,3);

\end{tikzpicture}\caption{\label{fig:volumefix3}}
\end{minipage}
\end{figure}

\item If $c+d+e> a$ we get a general case type shape (see Figure \ref{fig:volumefix4}), where $d'=d-(a-e)$. If $(a+c)(f+d')\ge \alpha$, we can move the right edge of $A^{\square}$ to the left (decreasing $d'$) until getting $\mu(\tilde{B})=\alpha$. Since $\mu(\tilde{A}\cup\tilde{B})>1+\alpha$ we are guaranteed that $\mu(\tilde{A})\ge1$. We end up with a Figure much like Figure \ref{fig:volumefix4}.

If $(a+c)(f+d')< \alpha$  we take an $\tilde{f}$ such that $\tilde{f}\cdot (a+c)=\alpha$, then $\tilde{b}=b-(\tilde{f}-f)$. By taking $\tilde{B}$ to be the rectangle of side lengths $\tilde{f}$ and $(a+c)$ and $\tilde{A}$ to be the rectangle of side lengths $a$ and $\tilde{b}$ we only increased the total area thus we are guaranteed that $\mu(\tilde{A})\ge 1$. In this case we get kissing rectangles, see Figure \ref{fig:volumefix5}.
\begin{figure}[h!]
\begin{minipage}{.5\textwidth}
  \centering
\begin{tikzpicture}[scale=0.80]
\draw[blue,   thick] (15,4) to (18,4); 
\draw (15.5,2) node{$a$};
\draw[blue,   thick] (15,0) to (18,0);
\draw (16.5,.5) node{$b$};
\draw[blue,   thick] (15,0) to (15,4);
\draw (19.25,0.5) node{$f$};
\draw[blue,   thick] (18,0) to (18,4);
\draw (16.5,4.5) node{$c$};
\draw (17.5,3.65) node{$d'$};
\node at (16.6,2) {$\textcolor{blue}{A^{\square}}$};
\draw[red,   thick] (18,0) to (20,0);
\draw[red,   thick] (20,0) to (20,5); 
\draw[red,   thick] (20,5) to (17,5);
\draw[red,   thick] (17,5) to (17,4);
\end{tikzpicture}\caption{\label{fig:volumefix4}}
\end{minipage}%
\begin{minipage}{.5\textwidth}
  \centering
\begin{tikzpicture}[scale=0.80]
\draw[blue,   thick] (15,4) to (16.7,4); 
\draw (15.5,2) node{$a$};
\draw[blue,   thick] (15,0) to (18,0);
\draw (16,.5) node{$\tilde{b}$};
\draw[blue,   thick] (15,0) to (15,4);
\draw (18.25,0.5) node{$\tilde{f}$};
\draw (16.5,4.5) node{$c$};
\node at (16,2) {$\textcolor{blue}{\tilde{A}}$};
\node at (18.4,2) {$\textcolor{red}{\tilde{B}}$};
\draw[red,   thick] (16.7,0) to (20,0);
\draw[red,   thick] (20,0) to (20,5); 
\draw[red,   thick] (20,5) to (16.7,5);
\draw[red,   thick] (16.7,5) to (16.7,0);
\end{tikzpicture}\caption{\label{fig:volumefix5}}
\end{minipage}
\end{figure}

\end{enumerate}
 
\end{proof}

Now we need only deal with the case when two corners of $B^{\square}$ are in $A^{\square}$.

\begin{lemma}\label{lem:twocorners}
If $(A,B)\in\gamma_{\alpha}$, and two corners of $B^{\square}$ are contained in $A^{\square}$, then there exists $(\tilde{A},\tilde{B})\in\mathfs{F}_\alpha$, such that $\rdb(\tilde{A},\tilde{B})\leq\rdb(A,B)$.
\end{lemma}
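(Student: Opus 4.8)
The plan is to follow the template of Lemmas \ref{lem:containedrec} and \ref{lem:oneortwocorners}: first replace $(A,B)$ by a pair of rectangles with no larger double bubble perimeter, then correct the two volumes while staying inside $\mathfs{F}_\alpha$. First I would normalize the configuration. If exactly two corners of $B^{\square}$ lie in $A^{\square}$ they must be \emph{adjacent}, since two diagonally opposite corners inside $A^{\square}$ would pin all four coordinates of $B^{\square}$ inside the coordinate ranges of $A^{\square}$ and hence force $B^{\square}\subseteq A^{\square}$ (the case of Lemma \ref{lem:containedrec}). After a rotation by a multiple of $90^\circ$ I may therefore assume the two bottom corners are the inside ones, so that $[b_{\rm left},b_{\rm right}]\subseteq[a_{\rm left},a_{\rm right}]$ while $a_{\rm bottom}<b_{\rm bottom}$ and $a_{\rm top}<b_{\rm top}$; that is, $B^{\square}$ protrudes through the top edge of $A^{\square}$ with its horizontal extent contained in that of $A^{\square}$ (edge alignments such as $b_{\rm left}=a_{\rm left}$ are absorbed, as in Lemma \ref{lem:containedrec}, by letting the relevant gap vanish). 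Then $A^{\square}\cap B^{\square}=[b_{\rm left},b_{\rm right}]\times[b_{\rm bottom},a_{\rm top}]$, and the natural target is the pair of genuine rectangles
$$\tilde A_0:=A^{\square},\qquad \tilde B_0:=B^{\square}\setminus A^{\square}=[b_{\rm left},b_{\rm right}]\times[a_{\rm top},b_{\rm top}],$$
a kissing-rectangles configuration in which $\tilde B_0$ rests on the top edge of $\tilde A_0$ and shares its \emph{entire} base (of length $b_{\rm right}-b_{\rm left}$) with it.

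Next I would prove $\rdb(\tilde A_0,\tilde B_0)\le\rdb(A,B)$. Since the shared boundary is counted once, $\rdb(A,B)=\rho(\partial A\cup\partial B)$ and likewise for the target, so it is enough to compare the $\ell^1$ lengths of the two planar sets $\partial A\cup\partial B$ and $\partial\tilde A_0\cup\partial\tilde B_0$. For this I would use the coarea identity for the $\ell^1$ length: for any finite union $E$ of rectifiable curves, $\rho(E)=\int_{\BR}\#\big(E\cap\{x=s\}\big)\,ds+\int_{\BR}\#\big(E\cap\{y=c\}\big)\,dc$, where the counts are of distinct points. It thus suffices to check that $\partial A\cup\partial B$ meets almost every vertical and horizontal line at least as often as the target does. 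The target meets a vertical line in $2$ points outside the column $(b_{\rm left},b_{\rm right})$ and in $3$ points inside it (bottom of $\tilde A_0$, the shared wall, and top of $\tilde B_0$); horizontally it meets every line in the band $(a_{\rm bottom},b_{\rm top})$ in $2$ points. The only delicate line is a vertical $\{x=s\}$ with $s\in(b_{\rm left},b_{\rm right})$: here the line meets both $A$ and $B$, because each set is connected and touches the two vertical lines bounding it, and since $A,B$ are disjoint open sets their slices are disjoint open intervals, so $\partial A\cup\partial B$ meets the line in at least $3$ distinct points, matching the target. Off the column one already gets the required two crossings from $A$ alone, and in the horizontal direction the overlap strip even yields a surplus; summing the two coarea integrals gives the inequality. (Equivalently one may exhibit the explicit disjoint boundary arcs, as in Lemma \ref{lem:oneortwocorners}, splitting each vertical count at $y=a_{\rm top}$ so that the arcs of $\partial A$ used lie in $\{y\le a_{\rm top}\}$ and those of $\partial B$ in $\{y\ge a_{\rm top}\}$, hence never coincide.)

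Finally I would correct the volumes. By Lemma \ref{lem:squareboundary}, $\mu(\tilde A_0)=\mu(A^{\square})\ge1$, and since $A,B$ are disjoint, $\mu(\tilde A_0)+\mu(\tilde B_0)=\mu(A^{\square}\cup B^{\square})\ge 1+\alpha$, while $\mu(\tilde B_0)$ may be larger or smaller than $\alpha$. Having landed on two honest kissing rectangles, this is a sub-case of the correction performed in Lemma \ref{lem:oneortwocorners}. If $\mu(\tilde B_0)\ge\alpha$ I retract the free top edge of $\tilde B_0$ to area $\alpha$ and then the free edges of $\tilde A_0$ to area $1$, which only decreases the perimeter. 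If $\mu(\tilde B_0)<\alpha$ I transfer area across the shared wall by the perimeter-neutral move of Figures \ref{fig:volumefix2}--\ref{fig:volumefix3}: enlarging the dimension of $\tilde B$ perpendicular to the shared base by $\delta$ while shrinking the matching dimension of $\tilde A$ by $\delta$ leaves $2(\cdots)+(\text{shared length})$ unchanged. As in Lemma \ref{lem:oneortwocorners} this branches according to whether the shared base is short or long relative to the edge of $\tilde A$ being shrunk: the short branch terminates in kissing rectangles (Figure \ref{fig:volumefix3}) and the long branch in the general case (Figures \ref{fig:volumefix4}--\ref{fig:volumefix5}). In either branch the total area only increases, so $\mu(\tilde A)\ge1$ is preserved and a final harmless shrink produces exact areas $1$ and $\alpha$, placing $(\tilde A,\tilde B)\in\mathfs F_\alpha$.

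I expect the main obstacle to be the volume-correction step rather than the perimeter bound. When $\tilde B_0$ is a thin sliver (with $b_{\rm right}-b_{\rm left}$ much smaller than the width of $A^{\square}$) the naive ``grow $\tilde B_0$ upward'' transfer removes area from $\tilde A$ faster than it adds it to $\tilde B$, and would drive $\mu(\tilde A)$ below $1$ before $\tilde B$ reaches area $\alpha$; this is precisely what forces the trading/sliding moves and, in the extreme branch, a departure from the kissing family into the general-case family. Verifying that the casework of Lemma \ref{lem:oneortwocorners} applies verbatim to the present, already-rectangular, starting configuration is the step requiring the most care.
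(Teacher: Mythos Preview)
Your proof is correct and, in fact, a bit cleaner than the paper's. The paper argues in the same style as Lemma~\ref{lem:oneortwocorners}: it sets up the grid $H_1,\dots,H_4,V_1,\dots,V_4$, extracts three disjoint crossings of \emph{both} overlap strips (so three lengths $D_{H_2,H_3}$ \emph{and} three lengths $D_{V_2,V_3}$) by the explicit path-finding argument, and uses this budget to build a \emph{general-case} configuration --- essentially $(A^{\square}\setminus B^{\square}_{\rm shifted},\,B^{\square}_{\rm shifted})$ after sliding $B^{\square}$ until one pair of edges aligns --- before invoking the volume fix of Lemma~\ref{lem:oneortwocorners}.

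You instead aim directly at the kissing pair $(A^{\square},\,B^{\square}\setminus A^{\square})$ and bound the perimeter via the coarea identity, reducing everything to a pointwise crossing count on vertical and horizontal lines. This buys two things: (i) you only need three crossings in \emph{one} direction (through the column $(b_{\rm left},b_{\rm right})$) rather than in both, so your intermediate target has strictly smaller $\rdb$ than the paper's, by exactly the overlap width; and (ii) the ``third crossing'' drops out immediately from disjointness of the slices $A\cap L$ and $B\cap L$, with no planarity bookkeeping. The trade-off is that your $\tilde B_0$ may fall short of area $\alpha$ (whereas the paper's $B^{\square}$ never does), so the volume-repair step carries a bit more weight in your route --- but, as you note, you are then sitting precisely in the kissing/general-case situation already handled in Lemma~\ref{lem:oneortwocorners}, and your branching there is correct.
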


\begin{proof}
Here we must have one of the following:  $b_{\rm top}>a_{\rm top}$, $b_{\rm right}>a_{\rm right}$, $b_{\rm bottom}<a_{\rm bottom}$, or $b_{\rm left}<a_{\rm left}$.  Let's suppose, without loss of generality, that $b_{\rm right}>a_{\rm right}$.  We construct our configuration in a similar way as before.  First, since two corners of $B^{\square}$ are in $A^{\square}$, it follows that $a_{\rm left}<b_{\rm left}\leq a_{\rm right}$.  However, if $b_{\rm left}=a_{\rm right}$, we can just replace $A$ with $A^{\square}$ and $B$ with $B^{\square}$.  This will increase the volume of both $A$ and $B$, and increase their joint boundary.  Then we can reduce the volumes as necessary, which will only decrease the double bubble perimeter.  So, we can assume that $a_{\rm left}<b_{\rm left}<a_{\rm right}$.  Let $H_1$ be the horizontal line passing through $a_{\rm top}$, $H_2$ be the horizontal line passing through $b_{\rm top}$, $H_3$ be the horizontal line passing through $b_{\rm bottom}$, and $H_4$ the horizontal line passing through $a_{\rm bottom}$.  We define $D_{H_i,H_{i+1}}$ as before, $i=1,2,3$.  Similarly define $V_1,V_2,V_3,V_4$ as the vertical lines passing through $a_{\rm left},b_{\rm left},a_{\rm right}$, and $b_{\rm right}$, respectively.  Let $D_{V_1,V_{i+1}}$ be defined as before, $i=1,2,3$.  Notice that we haven't eliminated the possibility that $H_1=H_2$, or $H_3=H_4$, or both.  See Figure \ref{fig:embedrec}. 

\begin{figure}[h!]
\begin{tikzpicture}[scale=0.6]


\node at (5.5,5.5) {$A^{\square}$};
\draw[red, thick] (0,0) to (0,6);
\draw[red, thick] (0,6) to (6,6);
\draw[red, thick] (6,6) to (6,0);
\draw[red, thick] (6,0) to (0,0);


\node at (7.5,3.5) {$B^{\square}$};
\draw[blue, thick] (4,4) to (8,4);
\draw[blue, thick] (8,4) to (8,2);
\draw[blue, thick] (8,2) to (4,2);
\draw[blue, thick] (4,2) to (4,4);


\draw[dotted, thick] (-3,6) to (9,6);
\node at (8.9,6.3) {$H_1$};


\draw[dotted, thick] (-3,4) to (9,4);
\node at (8.9,4.3) {$H_2$};


\draw[dotted, thick] (-3,2) to (9,2);
\node at (8.9,2.3) {$H_3$};


\draw[dotted, thick] (-3,0) to (9,0);
\node at (8.9,0.3) {$H_4$};


\draw[dotted, thick] (0,-3) to (0,9);
\node at (0.45,8.8) {$V_1$};


\draw[dotted, thick] (4,-3) to (4,9);
\node at (4.45,8.8) {$V_2$};


\draw[dotted, thick] (6,-3) to (6,9);
\node at (6.45,8.8) {$V_3$};


\draw[dotted, thick] (8,-3) to (8,9);
\node at (8.45,8.8) {$V_4$};


\draw[<->, thick] (-2.96,4.02) to (-2.96,5.98);
\node at (-4,5) {$D_{H_1,H_2}$};


\draw[<->, thick] (-2.96,2.02) to (-2.96,3.98);
\node at (-4,3) {$D_{H_2,H_3}$};


\draw[<->, thick] (-2.96,0.02) to (-2.96,1.98);
\node at (-4,1) {$D_{H_3,H_4}$};


\draw[<->, thick] (0.02,-2.96) to (3.98,-2.96);
\node at (2,-3.5) {$D_{V_1,V_2}$};


\draw[<->, thick] (4.02,-2.96) to (5.98,-2.96);
\node at (5.1,-3.5) {$D_{V_2,V_3}$};


\draw[<->, thick] (6.02,-2.96) to (7.98,-2.96);
\node at (7.1,-3.5) {$D_{V_3,V_4}$};

\end{tikzpicture}\\
\caption{\label{fig:embedrec}}
\end{figure}

We wish to find two disjoint paths with vertical lengths at least $D_{H_1,H_2}$, two disjoint paths with vertical length as least as long as $D_{H_3,H_4}$, three disjoint paths with vertical lengths at least as long as $D_{H_2,H_3}$, two disjoint paths with horizontal length at least as long as $D_{V_1,V_2}$, three disjoint paths with horizontal lengths at least as long as $D_{V_2,V_3}$, and two disjoint paths with horizontal lengths at least as long as $D_{V_3,V_4}$.  We achieve this much the same as in the previous Lemma. 


Finding the horizontal lengths is nearly identical. We can then proceed to construct our sets. If move $B^{\square}$ up until $H_1=H_2$ we reduce the double bubble perimeter and we get a shape of the general case type. Now we can fix the volumes in the same way as in the previous Lemma. 
\end{proof}
\section{KKT analysis}\label{sec:KKT}

In the previous section, we constructed a finite list of set types $\mathfs{F}_\alpha$ in which from any configuration $(A,B)\in\gamma_{\alpha}$ we obtain a configuration $(\tilde{A},\tilde{B})\in \mathfs{F}_{\alpha}$ so that $\rho_{\text{DB}}((\tilde{A},\tilde{B}))\le \rho_{\text{DB}}((A,B))$. In the upcoming analysis, however, it is convenient to note that all the cases in $\mathfs{F}_\alpha$ can be represented by the $6$ parameter configuration we called ``General Case" given in Figure \ref{fig:analysisgeneral}.

\begin{figure}[!h]
\centering
\begin{tikzpicture}[scale=0.4]

\draw[blue,  thick] (7,4) to (10,4); 
\draw (7.5,2) node{$a$};
\draw[blue,  thick] (7,0) to (10,0);
\draw (8.5,.5) node{$b$};
\draw[blue,  thick] (7,0) to (7,4);
\draw (11.25,1.5) node{$f$};
\draw[blue, thick] (10,0) to (10,4);
\draw (8.5,4.5) node{$c$};
\draw (9,3.65) node{$d$};
\draw (9.7,2.5) node{$e$};
\draw[red,  thick] (10,1) to (12,1);
\draw[red,  thick] (12,1) to (12,5); 
\draw[red, thick] (12,5) to (8,5);
\draw[red,  thick] (8,5) to (8,4);
\draw (9,8.25) node{${\tiny \begin{array}{l} \rdb=2(a+b+c+f)+(d+e)\\ ab=\textrm{1 or }\alpha\\ b\ge d,a\ge e\\ cd+cf+ef=\alpha\textrm{ or 1}\\ a,b,c,d,e,f>0 \end{array}}$};
\end{tikzpicture}\caption{\label{fig:analysisgeneral}}
\end{figure}
The other cases, kissing rectangles for instance, occur when some of the parameters defining the geometry of the configuration are set to zero. For kissing rectangles, this would consist of setting $e$ and $f$ to zero (or $c=d=0$) in Figure \ref{fig:analysisgeneral}. Using this configuration we have made a geometric problem into the minimization of a hyper-plane with algebraic inequality constraints. We want to minimize the $\rho_{\text{DB}}$ in Figure\ref{fig:analysisgeneral}, while remaining within the inequality contraints.

The Karush Kuhn Tucker method \cite{karush} is one method used for calculating minima in such problems; with some care the global minimum of each element of the above list can be found. In the previous lemmas, it was sometimes ambiguous which of the two shapes in a configuration had unit volume and which $\alpha$. For our purposes, that just means we must alternate which shape has volume $\alpha$ and analyze both. 

%
%
%


$\\$

The original problem we would have to solve is 6 dimensional, but our two equality constraints reduce it to four. These are that $ab=\alpha\textrm{ or 1}$ and $cd+cf+ef=\textrm{1 or }\alpha$. Using these constraints, our problem becomes to minimize

$$2(a+\frac{\alpha\textrm{ or 1}}{a}+c+\frac{(\textrm{1 or }\alpha)-cd}{c+e})+(d+e),$$ subject to 

$$(\alpha\textrm{ or 1})\ge ad,a\ge e,a\ge 0,c\ge 0,d\ge 0,e\ge 0,(\textrm{1 or }\alpha)\ge cd.$$

The variable $a$ cannot be zero, so we can exclude its inequality lagrange multiplier. The $(\alpha\textrm{ or 1})\ge ad$ constraint comes from $b=\frac{\alpha\textrm{ or 1}}{a}\ge d$ and $(\textrm{1 or }\alpha)\ge cd$ from $\frac{(\textrm{1 or }\alpha)-cd}{c+e}=f\ge 0$. Note that the conventional conditions required for KKT (equality constraints be affine, etc.) are not satisfied here. In this case, however, we can constrain our variable space in the upper octant and inside some hypercube. This is because if one of the variables becomes larger than some example double bubble perimeter, it is not optimal.  A final precaution we address is that due to the exclusion of the constraint $a\ge 0$ from our list of inequality constraints under analysis, our space of variables is not truly closed and not truly compact. In the upcoming analysis, other portions of the variable space will similarly be excluded (it will be explicitly mentioned when a portion of the variable space is excluded) because they are not valid configurations. However at some finite distance in variable space from these coordinates, the fact that $a$ (as an example) becomes small forces another dimension to grow as $a^{-1}$. This growth eventually pushes $\rho_{DB}$ over the perimeter of that example configuration we used to build the hypercube. Therefore in a similar manner we bound our domain in these degenerate cases by curves a finite distance from the degenerate case; the boundary here doesn't require explicit checking because by construction it is too large.

Thus our modified variable space is truly closed and bounded and therefore compact. The global minimum is either along the boundary or is the lowest local minimum inside the domain itself. The KKT method checks all of this by implementing the inequality constraints which define the boundary. 

Our above $\rho_{DB}$ yeilds the lagrangian

\begin{align*}
L=&2(a+\frac{\alpha\textrm{ or 1}}{a}+c+\frac{(\textrm{1 or }\alpha)-cd}{c+e})+(d+e)+((\alpha\textrm{ or 1})-ad)\mu_{1}+(a-e)\mu_{2}\\
&+c\mu_{3}+d\mu_{4}+e\mu_{5}+((\textrm{1 or }\alpha)-cd)\mu_{6}.
\end{align*}

Lets call $\beta=\alpha\textrm{ or 1},\gamma=\textrm{1 or }\alpha$, then the gradient of $L$ becomes

\begin{align*}
&\Bigg(2(1-\frac{\beta}{a^{2}})-d\mu_{1}+\mu_{2},2(1-\frac{\gamma+ed}{(c+e)^{2}})+\mu_{3}-d\mu_{6},2(\frac{-c}{c+e})+1+\mu_{4}-c\mu_{6},...\\
&...2(-\frac{\gamma-cd}{(c+e)^{2}})+1-a\mu_{1}-\mu_{2}+\mu_{5}\Bigg).
\end{align*}

For the KKT method, the inequality lagrangian multipliers are either positive and their conditions applied (i.e. if $\mu_{1}>0$ then $\beta=ad$) or they are zero. This means without symmetry arguments that there are $2^{6}$ systems of nonlinear equations to check for minima. If $\mu_{6}>0$ then $cd=\gamma$, then from our second equality constraint we have that $(c+e)f+cd=\textrm{1 or }\alpha$ and $(c+e)f=0$, or $c=e=0$ or $f=0$. The first is impossible and the second becomes kissing rectangles. As one may see, in parts of this calculation we will eliminate some of the 64 possible systems of equations by demonstrating geometrically what they resolve to, and then doing that case only once. In this case, this simple calculation got rid of 31 systems and left us only needing to calculate the general kissing rectangles case. Similarly if $\mu_{3}>0$, $\mu_{4}>0$, or $\mu_{5}>0$ then $c=0$, $d=0$, or $e=0$ and again we have kissing rectangles. This means we only have to calculate it with $\mu_{1}$ and $\mu_{2}$ possibly positive, leaving 4 systems of equations and the kissing rectangles to analyze. If we remember that $\mu_{1}$ corresponds to $b\ge d$, then by symmetry we realize that $\mu_{1}$ and $\mu_{2}$ being activated alone have the same effect by symmetry of the figure, so we need only check $\mu_{1}>0$ and both $\mu_{1}>0$ and $\mu_{2}>0$. So we have only 3 systems and kissing rectangles to calculate.

\subsection{Kissing rectangles} For kissing rectangles we let the sides of one rectangle be $a$,$b$, and the other $c$,$d$ (notation is changed for convenience, see Figure \ref{fig:falpha}). So $ab=\beta$ while $cd=\gamma$. And taking the $b$-$d$ edge to be the kissing side, we then take $b$ to be the larger size ($b\ge d$ or $\frac{\beta}{a}\ge \frac{\gamma}{c}$). Our lagrangian becomes

$$L=2(a+\frac{\beta}{a}+c)+\frac{\gamma}{c}+\mu_{1}(c\beta-a\gamma)$$

where the inequality constraint $\mu_{1}$ is $b\ge d$ or in terms of $a$ and $c$ $c\beta\ge a\gamma$. The gradient is

$$\left(2(1-\frac{\beta}{a^{2}})-\gamma\mu_{1},2-\frac{\gamma}{c^{2}}+\mu_{1}\beta\right)=0.$$

The unconstrained minimum has $a=\sqrt{\beta}=b, c=\sqrt{\frac{\gamma}{2}}$, $d=
\sqrt{2\gamma}$ and $$\rdb=2\left(2\sqrt{\beta}+\sqrt{\frac{\gamma}{2}}\right)+\sqrt{2\gamma}.$$

The perimeter becomes either $4\sqrt{\alpha}+2\sqrt{2}$ or $4+2\sqrt{2\alpha}$. This first equation again has $\gamma=1$, $\beta=\alpha$, $a=\sqrt{\alpha}=b$, and $d=\sqrt{2}$. Now we need $b\ge d$, which would mean $\alpha\ge 2$. This is a contradiction, because $\alpha\le 1$. The second equation, in the exact same way, forces us instead to have $\sqrt{\alpha}\le \sqrt{1/2}\implies \alpha\le 1/2$. For $\alpha>1/2$ we need to apply the constraint. We know now that $\gamma=\alpha,\beta=1$, so $c=a\alpha$, and our gradient becomes

$$\left((2-\frac{2}{a^{2}})-\alpha\mu_{1},2-\frac{\alpha}{c^{2}}+\mu_{1}\right)=0.$$

Multiplying the second equation in the gradient by $\alpha$ and adding it to the first we have $2\alpha-\frac{1}{a^{2}}+2-\frac{2}{a^{2}}=0$, so $2(\alpha+1)=\frac{3}{a^{2}},a=\sqrt{\frac{3}{2(\alpha+1)}},c=a\alpha$. Plugging these into our equation for $\rho_{\text{DB}}$ we obtain the kissing rectangle solution

\begin{lemma}\label{lem:kissingrecsolution}
For any $\alpha\in(0,1]$ the infimum among all $(A,B)\in\mathfs{F}_\alpha$ of the kissing rectangles type is achieved and admits:
$$\inf\{\rdb(A,B):(A,B)\in\mathfs{F}_\alpha\text{ are kissing rectangles}\}=(4+2\sqrt{2\alpha})\ind_{\left(0,\frac{1}{2}\right)}+(2\sqrt{6(1+\alpha)})\ind_{[\frac{1}{2},1]}$$
\end{lemma}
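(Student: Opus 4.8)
The plan is to reduce the kissing--rectangles configuration to a two--variable convex program and then read off the minimizer. Writing $\beta,\gamma$ for the two areas (so $\{\beta,\gamma\}=\{1,\alpha\}$) and using the area constraints $ab=\beta$, $cd=\gamma$ to eliminate $b=\beta/a$ and $d=\gamma/c$, the problem becomes
\[
\text{minimize}\quad F(a,c)=2\Big(a+\tfrac{\beta}{a}+c\Big)+\tfrac{\gamma}{c}\qquad\text{subject to}\quad a,c>0,\ c\beta\ge a\gamma,
\]
where the inequality encodes the convention $b\ge d$. I would first record two structural facts. First, $F$ is coercive: it tends to $+\infty$ as either of $a,c$ approaches $0$ or $\infty$, so its sublevel sets are compact inside the feasible region and the infimum is attained, which settles the ``achieved'' assertion. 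Second, the Hessian of $F$ is diagonal with positive entries $4\beta/a^3$ and $2\gamma/c^3$, so $F$ is strictly convex, and the feasible region is the intersection of a half--plane with the positive quadrant, hence convex. Consequently any point satisfying the KKT conditions is automatically the global minimizer, which is what upgrades the stationary points found below to genuine minima rather than merely critical points.

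Next I would locate the unconstrained critical point ($\mu_1=0$), namely $a=\sqrt{\beta}$ (so $b=\sqrt\beta$) and $c=\sqrt{\gamma/2}$ (so $d=\sqrt{2\gamma}$), with value $2\big(2\sqrt\beta+\sqrt{\gamma/2}\big)+\sqrt{2\gamma}$, and test feasibility in both orientations. For $(\beta,\gamma)=(\alpha,1)$ the value is $4\sqrt\alpha+2\sqrt2$, but feasibility $b\ge d$ forces $\sqrt\alpha\ge\sqrt2$, impossible for $\alpha\le1$, so this orientation is discarded. For $(\beta,\gamma)=(1,\alpha)$ the value is $4+2\sqrt{2\alpha}$ and feasibility $b\ge d$ reads $1\ge\sqrt{2\alpha}$, i.e. $\alpha\le\tfrac12$. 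Thus on $(0,\tfrac12]$ the interior optimum is admissible and contributes $4+2\sqrt{2\alpha}$.

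For $\alpha>\tfrac12$ the interior optimum violates $b\ge d$, so by convexity the minimizer lies on the active boundary $b=d$, i.e. $c\beta=a\gamma$. Substituting this relation reduces $F$ to a strictly convex single--variable function of $a$; minimizing gives, in the orientation $(\beta,\gamma)=(1,\alpha)$, the values $a=\sqrt{3/(2(1+\alpha))}$, $c=a\alpha$, and $\rdb=2\sqrt{6(1+\alpha)}$, as already sketched above. I would also run the mirror orientation $(\beta,\gamma)=(\alpha,1)$ on its boundary $c\alpha=a$ and confirm it yields the identical value $2\sqrt{6(1+\alpha)}$, so that the labelling convention $b\ge d$ loses nothing.

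Finally I would assemble the two candidates. On $(0,\tfrac12)$ both are feasible, and comparison follows from
\[
\big(4+2\sqrt{2\alpha}\big)^2-\big(2\sqrt{6(1+\alpha)}\big)^2=-8\big(\sqrt{2\alpha}-1\big)^2\le 0,
\]
so $4+2\sqrt{2\alpha}$ is the smaller value (strictly for $\alpha<\tfrac12$, with both equal to $6$ at $\alpha=\tfrac12$); for $\alpha\ge\tfrac12$ only the boundary value survives. This produces exactly the stated piecewise formula and exhibits its continuity at the switching point $\alpha=\tfrac12$. The step I expect to be the genuine obstacle is not any single computation but guaranteeing exhaustiveness and global optimality: one must be certain that the two orientations together with the active/inactive dichotomy of the lone constraint cover every kissing configuration, and that each stationary point really is the minimum. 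Strict convexity of $F$ on a convex domain, combined with coercivity, is the tool I would lean on to close both gaps uniformly, rather than checking second--order conditions configuration by configuration.
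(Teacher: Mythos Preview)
Your proof is correct and follows essentially the same route as the paper: eliminate $b,d$ via the area constraints, analyze the first--order conditions of $F(a,c)=2(a+\beta/a+c)+\gamma/c$ with the single linear constraint $c\beta\ge a\gamma$, check feasibility of the interior critical point in both orientations, and pass to the active constraint when it fails. Your version is in fact more complete than the paper's in two respects: you invoke strict convexity and coercivity to certify that the KKT points are global minimizers (the paper relies on the ambient compactness discussion earlier in the section), and you explicitly run the mirror orientation $(\beta,\gamma)=(\alpha,1)$ on its active boundary and verify it returns the same value $2\sqrt{6(1+\alpha)}$, a case the paper dismisses after only checking the interior point.
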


\subsection{Embedded rectangles} Now we go on to the case of $\min(\mu_{1},\mu_{2})>0$. This means $b=d$ and $a=e$ which becomes an ``Embedded rectangle" type. Again it becomes useful to breifly depart from our original notation. We can label this structure with $a$,$b$ being the inner rectangle's sides, $c$ and $d$ the outer (See Figure \ref{fig:falpha}). This means $ab=\beta$ the volume of the inner rectangle, $cd=\gamma+\beta$ the volume of the inner rectangle and the outer peice, and our lagrangian becomes

$$2(c+d)+a+b+\lambda_{1}(ab-\beta)+\lambda_{2}(cd-(\gamma+\beta))$$

or in an unrestrained form

$$2(\frac{\beta+\gamma}{d}+d)+a+\frac{\beta}{a}$$

The gradient of this is

$$\left(1-\frac{\beta}{a^{2}},1-\frac{\beta+\gamma}{d^{2}}\right),$$ so $a=b=\sqrt{\beta},c=d=\sqrt{\gamma+\beta}$, and altogether the perimeter is

$$\rho_{\text{DB}}=2\sqrt{\beta}+4\sqrt{\gamma+\beta},$$ which is either $2\sqrt{\alpha}+4\sqrt{1+\alpha}$ or $2+4\sqrt{1+\alpha}$. This second one is too large and is proven by our example in the paper to be suboptimal. So for this case we have

\begin{lemma}\label{lem:embedmin}
For any $\alpha\in(0,1]$ the infimum among all $(A,B)\in\mathfs{F}_\alpha$ of the embedded rectangles type is achieved and admits:
$$\inf\{\rdb(A,B):(A,B)\in\mathfs{F}_\alpha\text{ are embedded rectangles}\}=2\sqrt{\alpha}+4\sqrt{1+\alpha}.$$
\end{lemma}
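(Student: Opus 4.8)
The plan is to treat the embedded configuration as a constrained optimization problem in the four side lengths $a,b,c,d>0$ and to use the two equality constraints to eliminate variables, turning the objective into an unconstrained problem that \emph{separates}. Writing $\beta\in\{\alpha,1\}$ for the area of the inner rectangle and $\gamma$ for the complementary value, so that $ab=\beta$ and $cd=\beta+\gamma=1+\alpha$, I would solve $b=\beta/a$ and $c=(\beta+\gamma)/d$ and substitute into $\rho_{\text{DB}}=2(c+d)+a+b$. This recasts the objective as
$$\Big(a+\frac{\beta}{a}\Big)+2\Big(d+\frac{\beta+\gamma}{d}\Big),$$
a sum of two independent one-variable functions, one in $a$ and one in $d$.

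First I would minimize each summand separately. By AM--GM (or equivalently by setting the gradient computed above to zero), $t+\kappa/t\ge 2\sqrt{\kappa}$ with equality at $t=\sqrt{\kappa}$. Applying this with $\kappa=\beta$ and with $\kappa=\beta+\gamma$ yields the unique critical point $a=b=\sqrt{\beta}$, $c=d=\sqrt{\beta+\gamma}$, at which
$$\rho_{\text{DB}}=2\sqrt{\beta}+4\sqrt{\beta+\gamma}.$$
Since each summand is coercive (it tends to $+\infty$ as its variable approaches $0$ or $\infty$), this interior critical point is the global minimum over the open domain $a,d>0$. This simultaneously establishes that the infimum is attained and identifies its value, with no boundary analysis required.

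It then remains to verify the genuine inequality constraints coming from the geometry, namely that the outer side lengths dominate the inner ones, $c\ge a$ and $d\ge b$. At the critical point both reduce to $\sqrt{\beta+\gamma}\ge\sqrt{\beta}$, which holds strictly because $\gamma>0$; hence the constraints are inactive and the unconstrained minimizer is a legitimate embedded configuration. Finally I would compare the two admissible labelings dictated by which set carries volume $\alpha$: the choice $(\beta,\gamma)=(\alpha,1)$ gives $2\sqrt{\alpha}+4\sqrt{1+\alpha}$, whereas $(\beta,\gamma)=(1,\alpha)$ gives $2+4\sqrt{1+\alpha}$, and since $\alpha\le 1$ implies $\sqrt{\alpha}\le 1$ the former is always no larger, yielding the claimed formula. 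The main point requiring care—the step I would scrutinize most—is the justification that the interior critical point is the global minimum over a non-compact domain and that the embedding inequalities are automatically satisfied rather than binding; both follow cleanly, from coercivity and from $\gamma>0$ respectively.
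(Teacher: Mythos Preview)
Your proposal is correct and follows essentially the same approach as the paper: eliminate two variables via the area constraints to obtain the separable objective $\big(a+\tfrac{\beta}{a}\big)+2\big(d+\tfrac{\beta+\gamma}{d}\big)$, minimize each piece (the paper sets the gradient to zero; you equivalently invoke AM--GM), arrive at $a=b=\sqrt{\beta}$, $c=d=\sqrt{\beta+\gamma}$ and $\rho_{\text{DB}}=2\sqrt{\beta}+4\sqrt{\beta+\gamma}$, and then compare the two labelings. Your write-up is, if anything, slightly more careful: you explicitly check that the embedding inequalities are slack at the critical point and invoke coercivity to justify global minimality on the open domain, and you discard the labeling $\beta=1$ by the direct comparison $\sqrt{\alpha}\le 1$ rather than by appeal to an example configuration as the paper does.
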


\subsection{General case} We now check the unconstrained case where all $\mu_{i}=0$. For this purpose we go back to the original notation we established.

Assume all $\mu_{i}=0$. Then we have the following gradient

$$\left(2(1-\frac{\beta}{a^{2}}),2(1-\frac{\gamma+ed}{(c+e)^{2}}),2(\frac{-c}{c+e})+1,2(-\frac{\gamma-cd}{(c+e)^{2}})+1\right)=0$$

So $a=\sqrt{\beta},c=e,d=\frac{\gamma}{3c}$. The second constraint gives us $(c+e)^{2}=\gamma+ed$. Plugging in our expression for $e$ and $d$ in terms of $c$, we have $c=\sqrt{\frac{\gamma}{3}}$.

Now that we know $a$ and $c$ in terms of $\beta$ and $\gamma$, we can obtain the perimeter from the first expression in the KKT analysis (i.e. the perimeter for the general case in terms of our variables).

We obtain the following as perimeter: $2(2\sqrt{\beta}+2\sqrt{\frac{\gamma}{3}})+2\sqrt{\frac{\gamma}{3}}=4\sqrt{\beta}+6\sqrt{\frac{\gamma}{3}}$, which is $4\sqrt{\alpha}+6\sqrt{\frac{1}{3}}$ or $4+6\sqrt{\frac{\alpha}{3}}$. For the perimeter to represent a valid shape, we need $b\ge d\implies \sqrt{\alpha}\ge\sqrt{3}$, which is never true. The second double bubble perimeter is never optimal as can be seen by comparing it to the double bubble perimeter of the sets in Theorem \ref{thm:maintheorem} part \ref{part:exampleshapes} .


Now to the last case where only $\mu_{1}>0$. So $\beta=ad$ and we have the following gradient

$$\left(2(1-\frac{\beta}{a^{2}})-d\mu_{1},2(1-\frac{\gamma+ed}{(c+e)^{2}}),2(\frac{-c}{c+e})+1,2(-\frac{\gamma-cd}{(c+e)^{2}})+1-a\mu_{1}\right)=0$$

So $c=e$ and this gradient becomes (by reducing the dimension and removing e)

$$(2(1-\frac{\beta}{a^{2}})-d\mu_{1},2(1-\frac{\gamma+cd}{4c^{2}}),2(-\frac{\gamma-cd}{4c^{2}})+1-a\mu_{1})=0$$

Multiply the first equation by $a$, the third by $d$, and subtract the third by first to get $2(a-\frac{\beta}{a}+\frac{\gamma d-cd^{2}}{4c^{2}})-d$. Next from the second equation we get $4c^{2}=\gamma+cd$, so $d=\frac{\gamma d+cd^{2}}{4c^{2}}$. Applying this to the derived equation, we get $d=2(\frac{\beta}{a}-a)$. Now from our $\mu_{1}$ constraint we know $ad=\beta$ or $2(\beta-a^{2})=\beta$, so we get $a=\sqrt{\frac{\beta}{2}}$ and $d=\sqrt{2\beta}$. Now plugging $d$ back into $4c^{2}=\gamma+cd$ we get $4c^{2}-c\sqrt{2\beta}-\gamma=0$. So from the quadratic equation we have $c=e=\frac{\sqrt{2\beta}+\sqrt{2\beta+16\gamma}}{8}$ (the minus solution is negative so it can't work). This obtains all the relevant variables. From this we can plug into the perimeter and obtain:

$\rho_{\text{DB}}=2\left(\sqrt{\frac{\beta}{2}}+\frac{\beta}{\sqrt{\frac{\beta}{2}}}+\frac{\sqrt{2\beta}+\sqrt{2\beta+16\gamma}}{8}+\frac{\gamma-\frac{\sqrt{2\beta}+\sqrt{2\beta+16\gamma}}{8}\sqrt{2\beta}}{2\frac{\sqrt{2\beta}+\sqrt{2\beta+16\gamma}}{8}}\right)+\left(\sqrt{2\beta}+\frac{\sqrt{2\beta}+\sqrt{2\beta+16\gamma}}{8}\right)$

For this if let $\gamma=\alpha$, it is always more than double bubble perimeter of the sets in Theorem \ref{thm:maintheorem} part \ref{part:exampleshapes}. If $\beta=\alpha$ and $\gamma=1$, then we do get valid and smaller answers for $\alpha<0.12$. But we also need $e\le a$, or $\frac{\sqrt{2\alpha}+\sqrt{2\alpha+16}}{8}\le \sqrt{\frac{\alpha}{2}}$ which is only true outside of the range from 0 to 1. Therefore this is an invalid answer and we obtain that: 
\begin{lemma}\label{lem:generalmin}
The minimum of the double bubble perimeter over every configuration in $\mathfs{F}_\alpha$ is the minimum between the kissing rectangles and embedded rectangle given in Lemmas \ref{lem:kissingrecsolution} and \ref{lem:embedmin}. 
\end{lemma}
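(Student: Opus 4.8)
The plan is to assemble the case analysis already carried out in this section into a single minimization statement. First I would invoke the compactness argument established above: after restricting each of the six variables to a sufficiently large hypercube in the positive octant (justified because shrinking any one side forces a dual side to blow up like its reciprocal, eventually pushing $\rho_{\text{DB}}$ past the perimeter of a fixed reference configuration), the feasible set for the general-case problem of Figure \ref{fig:analysisgeneral} is compact and $\rho_{\text{DB}}$ is continuous on it. Hence a global minimizer exists, and it is either an interior critical point or lies on the boundary; in either case the KKT conditions hold there with nonnegative multipliers $\mu_1,\dots,\mu_6$.

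Next I would use the reduction already obtained from the sign pattern of the multipliers. Whenever any of $\mu_3,\mu_4,\mu_5,\mu_6$ is strictly positive, the corresponding active constraint forces one of $c,d,e,f$ (or a pair) to vanish, collapsing the configuration to kissing rectangles; this disposes of all but the four systems in which only $\mu_1$ and $\mu_2$ may be active. By the left--right symmetry of Figure \ref{fig:analysisgeneral} that exchanges the roles of $\mu_1$ and $\mu_2$, it then suffices to examine three remaining systems: all $\mu_i=0$ (the genuine general case), $\mu_1>0$ alone, and $\mu_1,\mu_2>0$ simultaneously (the embedded-rectangle case treated in Lemma \ref{lem:embedmin}).

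The heart of the argument is then to show that of these three, only the embedded-rectangle system produces a feasible minimizer, and that its value together with the kissing-rectangle value of Lemma \ref{lem:kissingrecsolution} exhausts the infimum. For the all-$\mu_i=0$ system the critical point yields $a=\sqrt{\beta}$, $c=e=\sqrt{\gamma/3}$, $d=\gamma/(3c)$, giving perimeter $4\sqrt{\beta}+6\sqrt{\gamma/3}$; the assignment $\beta=\alpha,\gamma=1$ violates the feasibility requirement $b\ge d$ (it would force $\sqrt{\alpha}\ge\sqrt{3}$, impossible for $\alpha\le 1$), while the assignment $\beta=1,\gamma=\alpha$ is strictly larger than the double bubble perimeter of the sets in Theorem \ref{thm:maintheorem} part \ref{part:exampleshapes}, hence never optimal. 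For the $\mu_1>0$-only system the critical point gives $a=\sqrt{\beta/2}$, $d=\sqrt{2\beta}$, and $c=e=\frac{\sqrt{2\beta}+\sqrt{2\beta+16\gamma}}{8}$; here the feasibility condition $e\le a$ fails throughout $\alpha\in(0,1]$, and in the regime where the value is otherwise competitive it again exceeds the example-shape perimeter. Thus every interior and mixed-boundary critical point is either infeasible or dominated, leaving kissing rectangles (Lemma \ref{lem:kissingrecsolution}) and embedded rectangles (Lemma \ref{lem:embedmin}) as the only candidates for the global minimum over $\mathfs{F}_\alpha$, and that minimum is the smaller of the two.

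I expect the main obstacle to be this last elimination step: verifying cleanly that the $\mu_1>0$-only critical value, an unwieldy closed-form expression in $\beta$ and $\gamma$, is feasibility-violating or suboptimal uniformly over the whole range $\alpha\in(0,1]$ and for both assignments $(\beta,\gamma)=(\alpha,1)$ and $(\beta,\gamma)=(1,\alpha)$. It is precisely the comparison of this mixed case against the explicit shapes of Theorem \ref{thm:maintheorem} part \ref{part:exampleshapes} that rules out an exotic six-parameter configuration beating the two named families, and care is needed to confirm the constraint $e\le a$ genuinely fails on the entire parameter interval rather than just at its endpoints.
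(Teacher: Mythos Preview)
Your proposal is correct and follows essentially the same route as the paper: it assembles the KKT case analysis of Section~\ref{sec:KKT} (compactness, elimination of the $\mu_3,\dots,\mu_6$ branches to kissing rectangles, the $\mu_1\leftrightarrow\mu_2$ symmetry reduction to three systems, and the infeasibility/suboptimality checks for the all-zero and $\mu_1>0$-only critical points) into the conclusion that only the kissing-rectangle and embedded-rectangle values survive. The concern you flag at the end---verifying uniformly in $\alpha$ that the $\mu_1>0$-only branch is ruled out by $e\le a$ or by comparison with the shapes of Theorem~\ref{thm:maintheorem}\,\ref{part:exampleshapes}---is exactly the point the paper handles by the same direct substitution, so there is no additional idea required.
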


\begin{remark}
The two graphs for $Vol(A)=1,Vol(B)=\alpha$ and $Vol(A)=\alpha,Vol(B)=1$ are different because in one case the encased rectangle has volume 1 and never gets small enough to be absorbed into the bigger rectangle like in the embedded rectangle case, so we only see the kissing rectangles case for it. The other case exhibits all portions of the minimum. See figure \ref{fig:volumecompare} for a comparison of the two cases.
\begin{figure}[h!]
\centering
\begin{tikzpicture}[scale=0.9]
	\begin{axis}[
		legend pos=south east,
		xlabel=$\alpha$,
		ylabel={$\rho_{\text{DB}}(\alpha)$}
	]
	\addplot[color=blue] coordinates {
		(0,4)
		(.0125,4.2485292)
		(.025,4.3659191)
		(.05,4.5459939)
		(.1,4.8276909)
		(.15,5.0641188)
		(.2,5.264911)
		(.25,5.4142136)
		(.3,5.5491933)
		(.35,5.6733201)
		(.4,5.7888544)
		(.45,5.8973666)
		(.5,6)
		(.55,6.0991803)
		(.6,6.1967734)
		(.65,6.2928531)
		(.7,6.3874878)
		(.75,6.4807407)
		(.8,6.5726707)
		(.85,6.6633325)
		(.9,6.7527772)
		(.95,6.8410526)
		(1.,6.9282032)
		
	};
	\addlegendentry{Vol(A)=$\alpha$,Vol(B)=1}
	\addplot[color=red] coordinates {
		(0,4)
		(.0125,4.3162278)
		(.025,4.4472136)
		(.05,4.6324555)
		(.1,4.8944272)
		(.15,5.0954451)
		(.2,5.264911)
		(.25,5.4142136)
		(.3,5.5491933)
		(.35,5.6733201)
		(.4,5.7888544)
		(.45,5.8973666)
		(.5,6)
		(.55,6.0991803)
		(.6,6.1967734)
		(.65,6.2928531)
		(.7,6.3874878)
		(.75,6.4807407)
		(.8,6.5726707)
		(.85,6.6633325)
		(.9,6.7527772)
		(.95,6.8410526)
		(1.,6.9282032)
	};
	\addlegendentry{Vol(A)=1,Vol(B)=$\alpha$}
	\end{axis}
\end{tikzpicture}\caption{\label{fig:volumecompare}}
\end{figure}
\end{remark}


\section{Proof of Theorem \ref{thm:maintheorem}}\label{sec:proof}
In this section we collect the results of the previous sections to prove our main result.

In Lemmas \ref{lem:kissingrecsolution}, \ref{lem:embedmin} and \ref{lem:generalmin} we analyze each of the possible elements of $\mathfs{F}_\alpha$ using the KKT method and by comparing them we can find a global minimizer in $\mathfs{F}_\alpha$ which we call here $\chi_\alpha\in\mathfs{F}_\alpha$ satisfying:
\begin{lemma}\label{lem:chi}
\begin{enumerate}[I.]
\item \label{lem:chi1}For any $0<\alpha \le1$ there is a $\chi_\alpha\in\mathfs{F}_\alpha$ such that for any $(A,B)\in\mathfs{F}_\alpha$, $\rdb(\chi_\alpha)\le\rdb((A,B))$. 
\item \label{lem:chi2}\begin{align*}
\rho_{\text{DB}}(\chi_\alpha)&=(4\sqrt{1+\alpha}+2\sqrt{\alpha})\ind_{[0,\frac{688-480\sqrt{2}}{49}]}(\alpha)+(4+2\sqrt{2\alpha})\ind_{\left(\frac{688-480\sqrt{2}}{49},\frac{1}{2}\right)}(\alpha)\\ \nonumber &+(2\sqrt{6(1+\alpha)})\ind_{[\frac{1}{2},1]}(\alpha)
\end{align*}
\item \label{lem:chi3}For $\alpha=\frac{688-480\sqrt{2}}{49}$ we can choose for $\chi_\alpha$ either Figure \ref{fig:possibleinfchi} (a) or (b), for $\alpha\in[1/2,1]$ $\chi_\alpha$ satisfies Figure \ref{fig:possibleinfchi} (c), for $\alpha\in\left(\frac{688-480\sqrt{2}}{49},\frac{1}{2}\right)~$ $\chi_\alpha$ satisfies Figure \ref{fig:possibleinf} (b), and for $\alpha\in\left(0,\frac{688-480\sqrt{2}}{49}\right)~$ $\chi_\alpha$ satisfies Figure \ref{fig:possibleinfchi} (a).
\begin{figure}[h!]\begin{tikzpicture}[scale=0.7]

\draw[blue,  thin] (0,4) to (4,4);
\draw[blue,  thin] (0,0) to (4,0);
\draw[blue,  thin] (0,0) to (0,4);
\draw[blue,  thin] (4,0) to (4,4);
\draw[blue, thin] (3,3) to (3,4);
\draw[blue, thin] (3,3) to (4,3);
\draw (2,0.35) node{$\sqrt{1+\alpha}$};
\draw (0.85,2) node{$\sqrt{1+\alpha}$};
\draw (2.5,3.5) node{$\sqrt{\alpha}$};
\draw (3.5,2.5) node{$\sqrt{\alpha}$};
\draw (2,-0.5) node{(a)};
\draw[blue,  thin] (5,4) to (9,4);
\draw[blue,  thin] (5,0) to (9,0);
\draw[blue,  thin] (5,0) to (5,4);
\draw[blue,  thin] (9,0) to (9,4);
\draw (7,0.25) node{$1$};
\draw (5.25,2) node{$1$};
\draw (7,-0.5) node{(b)};
\draw[blue,  thin] (9,1) to (10.5,1);
\draw[blue,  thin] (9,3) to (10.5,3);
\draw[blue,  thin] (10.5,1) to (10.5,3);
\draw (9.80,1.75) node{$\sqrt{2\alpha}$};
\draw (9.80,3.5) node{$\frac{\sqrt{2\alpha}}{2}$};
\draw[blue,  thin] (11,4) to (14.5,4);
\draw[blue,  thin] (11,0) to (14.5,0);
\draw[blue,  thin] (11,0) to (11,4);
\draw[blue,  thin] (14,0) to (14,4);
\draw[blue,  thin] (14.5,4) to (16.5,4);
\draw[blue,  thin] (14.5,0) to (16.5,0);
\draw[blue,  thin] (16.5,4) to (16.5,0);
\draw (11.99,2) node{$\sqrt{\frac{2(1+\alpha)}{3}}$};
\draw (12.2,3.3) node{$\sqrt{\frac{3}{2(1+\alpha)}}$};
\draw (15.22,3.3) node{$\alpha\sqrt{\frac{3}{2(1+\alpha)}}$};
\draw (14,-0.5) node{(c)};

\end{tikzpicture}\caption{\label{fig:possibleinfchi}}
\end{figure}
\end{enumerate}
\end{lemma}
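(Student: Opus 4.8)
The plan is to treat Lemma \ref{lem:chi} as a bookkeeping step that assembles the three computations already carried out in Lemmas \ref{lem:kissingrecsolution}, \ref{lem:embedmin} and \ref{lem:generalmin}. Write $E(\alpha):=2\sqrt{\alpha}+4\sqrt{1+\alpha}$ for the embedded-rectangle infimum (Lemma \ref{lem:embedmin}) and $K(\alpha):=(4+2\sqrt{2\alpha})\ind_{(0,1/2)}+2\sqrt{6(1+\alpha)}\ind_{[1/2,1]}$ for the kissing-rectangle infimum (Lemma \ref{lem:kissingrecsolution}). By Lemma \ref{lem:generalmin} the infimum over all of $\mathfs{F}_\alpha$ equals $\min\{E(\alpha),K(\alpha)\}$, and since each of the two constituent infima is attained, part \ref{lem:chi1} follows immediately: take $\chi_\alpha$ to be whichever of the two attaining configurations realizes the smaller value, and either one when they coincide.

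For part \ref{lem:chi2} the whole content is to evaluate $\min\{E,K\}$ on each subinterval. First I would record the within-kissing continuity at $\alpha=1/2$: $4+2\sqrt{2\cdot\tfrac12}=6=2\sqrt{6(1+\tfrac12)}$. Then I would locate the crossover between $E$ and the first kissing branch $K_1(\alpha):=4+2\sqrt{2\alpha}$ on $(0,1/2)$ by solving $E(\alpha)=K_1(\alpha)$. Isolating $4\sqrt{1+\alpha}=4+2\sqrt{\alpha}(\sqrt2-1)$ and squaring once collapses the constant terms and leaves, after dividing by $\sqrt\alpha$, the relation $\sqrt\alpha\,(1+2\sqrt2)=4(\sqrt2-1)$; squaring again and rationalizing via $(9+4\sqrt2)(9-4\sqrt2)=49$ yields exactly $\alpha^*:=\frac{688-480\sqrt2}{49}$.

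The delicate point here is sign control: squaring introduces the spurious root $\alpha=0$ (indeed $E(0)=K_1(0)=4$), so I must confirm that $\alpha^*$ is the only crossing in the open interval and determine on which side each branch wins. I would do this with the small-$\alpha$ expansion $E-K_1=(2-2\sqrt2)\sqrt\alpha+O(\alpha)$, whose leading coefficient is negative, giving $E<K_1$ just above $0$; together with a single test value (e.g. $\alpha=0.3$, where $K_1<E$) and the fact that the reduced equation $\sqrt\alpha\,(1+2\sqrt2)=4(\sqrt2-1)$ has the unique positive solution $\alpha^*$, this pins down $E\le K_1$ on $(0,\alpha^*]$ and $K_1\le E$ on $(\alpha^*,1/2)$. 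On $[1/2,1]$ I would instead compare $E$ with $K_2(\alpha):=2\sqrt{6(1+\alpha)}$: collecting the $\sqrt{1+\alpha}$ terms and squaring shows $K_2\le E$ is equivalent to $4\alpha\ge(40-16\sqrt6)(1+\alpha)$, i.e. to $\alpha\ge\frac{40-16\sqrt6}{16\sqrt6-36}\approx0.253$, which holds throughout $[1/2,1]$. Combining the three regimes gives precisely the piecewise formula of part \ref{lem:chi2}.

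Finally, part \ref{lem:chi3} requires no new work: the minimizing side lengths produced in the KKT solutions are read off directly. The embedded optimizer $a=b=\sqrt\alpha$, $c=d=\sqrt{1+\alpha}$ is Figure \ref{fig:possibleinfchi}(a); the unconstrained kissing optimizer $a=b=1$, $c=\tfrac{\sqrt{2\alpha}}2$, $d=\sqrt{2\alpha}$ is Figure \ref{fig:possibleinfchi}(b); and the constrained kissing optimizer $a=\sqrt{\tfrac{3}{2(1+\alpha)}}$, $b=\sqrt{\tfrac{2(1+\alpha)}3}$, $c=\alpha a$, $d=b$ is Figure \ref{fig:possibleinfchi}(c). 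At $\alpha=\alpha^*$ the embedded and kissing minima are equal by construction, so either (a) or (b) may serve as $\chi_\alpha$, which is exactly the stated nonuniqueness. The main obstacle is the crossover analysis of part \ref{lem:chi2}: the algebra itself is routine, but the combination of a spurious root at $0$ and the need to exclude extra sign changes means the comparison must be justified by explicit expansion and sign checks rather than by blindly equating the two closed forms.
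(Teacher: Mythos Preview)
Your proposal is correct and follows the same approach as the paper: the paper presents Lemma \ref{lem:chi} as the immediate summary of Lemmas \ref{lem:kissingrecsolution}, \ref{lem:embedmin} and \ref{lem:generalmin}, stating only that ``by comparing them we can find a global minimizer in $\mathfs{F}_\alpha$'' without writing out any further argument. You have simply supplied the comparison details the paper leaves implicit --- the derivation of the crossover $\alpha^*=\frac{688-480\sqrt2}{49}$ from $E(\alpha)=K_1(\alpha)$, the sign checks establishing which branch wins on each subinterval, and the verification that $K_2\le E$ on $[1/2,1]$ --- all of which are routine and match the paper's stated conclusions.
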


\begin{proof}[Proof of Theorem \ref{thm:maintheorem}]
First let $0<\alpha\le 1$. Take some sequence $\chi_i\in\gamma_\alpha$ such that $\lim_{i\rightarrow\infty}\rho_{\text{DB}}(\chi_i)=\rho_{\text{DB}}(\Gamma_\alpha)$. By Lemmas \ref{lem:containedrec}, \ref{lem:oneortwocorners} and \ref{lem:twocorners}, for each element of this sequence we obtain an element $\tilde{\chi}_{i}\in\mathfs{F}_{\alpha}$ such that $\rho_{\text{DB}}(\tilde{\chi}_{i})\le \rho_{\text{DB}}(\chi_{i})$. 
By Lemma \ref{lem:chi} part \ref{lem:chi1} there is a $\chi_\alpha\in\mathfs{F}_\alpha$ satisfying for any $i\in\BN$,
$$\rdb(\chi_\alpha)\le\rdb(\tilde{\chi}_i)\le\rdb(\chi_i).$$ 
We have that 
$$\rdb(\Gamma_\alpha)\le\rdb(\chi_\alpha)\le \lim_{i\rightarrow\infty}\rho_{\text{DB}}(\chi_i)=\rdb(\Gamma_\alpha),$$
and thus $\chi_\alpha\in\Gamma_\alpha$ and $\Gamma_\alpha$ is non empty, establishing part \ref{existencepart} of Theorem \ref{thm:maintheorem}. Since we have that $\chi_\alpha\in\Gamma_\alpha$, Lemma \ref{lem:chi} parts \ref{lem:chi2} and \ref{lem:chi3} establishes Theorem \ref{thm:maintheorem} parts \ref{part1}  and \ref{part:exampleshapes}.

\end{proof}

\bibliography{ri}

\begin{thebibliography}{10}

\bibitem{alexander1990wulff}
K.~Alexander, J.T. Chayes, and L.~Chayes.
\newblock The wulff construction and asymptotics of the finite cluster
  distribution for two-dimensional bernoulli percolation.
\newblock {\em Communications in mathematical physics}, 131(1):1--50, 1990.

\bibitem{biskup2015isoperimetry}
M.~Biskup, O.~Louidor, E.~B. Procaccia, and R.~Rosenthal.
\newblock Isoperimetry in two-dimensional percolation.
\newblock {\em Communications on Pure and Applied Mathematics},
  68(9):1483--1531, 2015.

\bibitem{bodineau2000rigorous}
T.~Bodineau, D.~Ioffe, and Y.~Velenik.
\newblock Rigorous probabilistic analysis of equilibrium crystal shapes.
\newblock {\em Journal of Mathematical Physics}, 41(3):1033--1098, 2000.

\bibitem{cerf2006wulff}
R.~Cerf.
\newblock {\em The Wulff Crystal in Ising and Percolation Models: Ecole
  D'Et{\'e} de Probabilit{\'e}s de Saint-Flour XXXIV-2004}.
\newblock Springer, 2006.

\bibitem{foisy1993standard}
J.~Foisy, Manuel Alfaro~G., J.~Brock, N.~Hodges, and J.~Zimba.
\newblock The standard double soap bubble in r2 uniquely minimizes perimeter.
\newblock {\em Pacific journal of mathematics}, 159(1):47--59, 1993.

\bibitem{doublebubbleconj}
M.~Hutchings, F.~Morgan, M.~Ritoré, and A.~Ros.
\newblock Proof of the double bubble conjecture.
\newblock {\em Annals of Mathematics}, 155(2):459--489, 2002.

\bibitem{karush}
W.~Karush.
\newblock Minima of functions of several variables with inequalities as side
  conditions.
\newblock {\em Master thesis, University of Chicago}, 1939.

\bibitem{milman2018gaussian}
E.~Milman and J.~Neeman.
\newblock The gaussian double-bubble conjecture.
\newblock {\em arXiv preprint arXiv:1801.09296}, 2018.

\bibitem{morgan1998wulff}
F.~Morgan, C.~French, and S.~Greenleaf.
\newblock Wulff clusters in $r^2$.
\newblock {\em The Journal of Geometric Analysis}, 8(1):97, 1998.

\bibitem{taylor1974existence}
J.~Taylor.
\newblock Existence and structure of solutions to a class of nonelliptic
  variational problems.
\newblock In {\em Symposia Mathematica}, volume~14, pages 499--508, 1974.

\bibitem{taylor1975unique}
J.~Taylor.
\newblock Unique structure of solutions to a class of nonelliptic variational
  problems.
\newblock In {\em Proc. Symp. Pure Math. AMS}, volume~27, pages 419--427, 1975.

\bibitem{wul1901frage}
G.~Wul.
\newblock Zur frage der geschwindigkeit des wachstums und der auflosung der
  kristall achen.
\newblock {\em Z. Kristallogr}, 34:449--530, 1901.

\end{thebibliography}
\bibliographystyle{plain}

%
%



\end{document}